\documentclass[12pt]{article}
 \usepackage[margin=1in]{geometry} 
\usepackage{amsmath,amsthm,amssymb,amsfonts}
 \usepackage{amssymb}

\usepackage[utf8]{inputenc}
\usepackage[pagewise]{lineno}

\usepackage{hyperref}
 \usepackage{csquotes}
\usepackage[utf8]{inputenc}
\usepackage[english]{babel}
\usepackage{xcolor}

\providecommand{\keywords}[1]
{
  \small	
  \textbf{\textit{Keywords:}} #1
}

\newcommand{\MSC}[1]{%
  \small
  \textbf{\textit{Mathematics Subject Classification:}} #1
}
\title{Absence of blow-up in a fully parabolic chemotaxis system with weak singular sensitivity and logistic damping in dimension two }
\author{
    Minh Le\thanks{The Institute of Theoretical Sciences, Westlake University, China \texttt{(leminh@westlake.edu.cn)}} }
\date{}

\begin{document}
\maketitle

\begin{abstract}
It is shown in this paper that blow-up does not occur in the following chemotaxis system under homogeneous Neumann boundary conditions in a smooth, open, bounded domain \(\Omega \subset \mathbb{R}^2\):  
\begin{equation*} 
      \begin{cases}
          u_t = \Delta u - \chi \nabla \cdot \left( \frac{u}{v^k} \nabla v \right) + ru - \mu u^2,  \qquad &\text{in } \Omega \times (0,T_{\rm max}), \\
         v_t =  \Delta v - \alpha v + \beta u, \qquad &\text{in } \Omega \times (0,T_{\rm max}),  
      \end{cases}
\end{equation*}  
where \( k \in  (0,1) \), and \(\chi, r, \mu, \alpha, \beta \) are positive parameters. Known results have already established the same conclusion for the parabolic-elliptic case. Here, we complement these findings by extending the result to the fully parabolic case.  

\end{abstract}
\keywords{Chemotaxis, logistic sources, global existence, global boundedness, weak singular sensitivity}\\
\MSC{35B35, 35K45, 35K55, 92C15, 92C17}

\numberwithin{equation}{section}
\newtheorem{theorem}{Theorem}[section]
\newtheorem{lemma}[theorem]{Lemma}
\newtheorem{remark}{Remark}[section]
\newtheorem{Prop}{Proposition}[section]
\newtheorem{Def}{Definition}[section]
\newtheorem{Corollary}{Corollary}[theorem]
\allowdisplaybreaks

\section{Introduction}
Cells or microorganisms can direct their movements toward increasing concentrations of a signal they secrete themselves, a phenomenon known as chemotaxis. This process plays a crucial role in biology, as understanding how microorganisms move allows us to predict their pattern formation. In the 1970s, thanks to the pioneering work of Keller and Segel \cite{Keller, Keller2}, this phenomenon was successfully modeled using systems of partial differential equations. Chemotaxis models have attracted significant attention from the mathematical community, not only due to its practical applications but also because of its intriguing mathematical properties. For instance, in the simplest form of the chemotaxis model, known as the Keller-Segel system, there is a phenomenon referred to as critical mass. Specifically, if the total mass of the population exceeds a certain threshold, solutions blow up in finite or infinite time \cite{Nagai1, Nagai2, Nagai3, Nagai4}, whereas if the mass is below this threshold, solutions remain globally bounded \cite{Dolbeault, Dolbeault1, NSY}. Consequently, understanding the conditions that lead to blow-up versus global boundedness is one of the most fundamental topics in the study of chemotaxis systems. In this paper, we investigate whether blow-up can occur for the following system in a smooth bounded domain $\Omega \subset \mathbb{R}^2$:

 \begin{equation} \label{1}
      \begin{cases}
          u_t = \Delta u - \chi \nabla \cdot ( \frac{u}{v^k} \nabla v) +ru -\mu u^2,  \qquad &\text{in } \Omega \times (0,T_{\rm max}), \\
        \kappa v_t =  \Delta v - \alpha v +\beta u, \qquad &\text{in } \Omega \times (0,T_{\rm max}),  \\
          u(x,0) =u_0(x ),  \qquad  \kappa v(x,0)=  \kappa v_0(x), \qquad &\text{in } \Omega,
      \end{cases}
\end{equation}
where $\kappa \in \left \{ 0,1 \right \}$, $\chi>0$, $r >0$, $\mu>0$, $k \in(0,1)$, $T_{\rm max} \in (0, \infty]$ is the maximal existence time, and 
\begin{equation} \label{initial}
    \begin{cases}
        u_0 &\in C^{0} (\bar{\Omega}) \text{ is nonnegative with } \int_\Omega u_0 >0  \\
      v_0 &\in W^{1, \infty}(\Omega), \quad v_0> 0 \quad \text{in }\Omega.
    \end{cases}
\end{equation}
The system \eqref{1} is endowed with the homogeneous Neumann boundary conditions:
\begin{align} \label{bdry}
    \frac{\partial u}{\partial \nu}= 0, \qquad \frac{\partial v}{\partial \nu}=0, \qquad \text{on } \partial \Omega \times (0,T_{\rm max}).
\end{align}
Let us briefly recall some known results related to this system as well as address the main question that we are pursuing in this paper.\\

\textbf{Singular Sensitivity:}
The system \eqref{1}, when $k=\kappa=1$ and $r=\mu =0$, was originally introduced in \cite{Keller+Segel}, where the term $\frac{\chi}{v}$ with $\chi >0$ represents the singular sensitivity determined according to the Weber-Fechner law. One of the first results concerning the blow-up and global existence of solutions was established in \cite{Nagai+Senba}. The authors proved that when $\kappa=0$ and the initial data is radially symmetric, solutions remain globally bounded in $\Omega= B(0,L) \subset \mathbb{R}^n$, where $L>0$ and $n \geq 2$, provided that $\chi< \frac{2}{(n-2)_+}$, while blow-up occurs when $\chi> \frac{2n}{n-2}$ for $n \geq 3$.  Later, it was shown in \cite{Biler-1999} that finite-time blow-up does not occur when $\chi< \frac{2}{n}$ for $n \geq 2$ in the parabolic-elliptic case ($\kappa=0$) and when $\chi \leq 1$ for $n=2$ in the fully parabolic case ($\kappa=1$) for arbitrary initial conditions. This condition on $\chi$ when $\kappa=0$ was later proven in \cite{KMT} to be sufficient to ensure the global boundedness of solutions.  For the fully parabolic case in arbitrary dimensions, \cite{Winkler+2011} demonstrated that solutions exist globally in time under the condition $\chi< \sqrt{\frac{2}{n}}$ for $n \geq 2$, though the question of whether solutions remain globally bounded was left open. Subsequently, in \cite{Fujie}, it was confirmed that solutions are indeed globally bounded under the same restriction on $\chi$, leading to another question: whether $\sqrt{\frac{2}{n}}$ is the optimal bound for $\chi$. In \cite{Lankeit_2016}, a partial answer was provided: the author showed that $\chi< 1.015$ can still prevent blow-up for $n=2$.  
 \\
 
\textbf{Singular Sensitivity with Logistic Source:}
In \cite{MKTAM}, with the presence of the logistic source, $ru - \mu u^2$, when $\kappa=1$, the authors proved that solutions exist globally in time for $n = 2$. In \cite{FWY}, the authors studied a similar problem but for the parabolic-elliptic case in two dimensions. They showed that solutions exist globally for any positive parameters $r, \mu$, and that solutions are bounded under the restriction $r > \frac{\chi^2}{4}$ when $\chi \in (0,2)$ and $r > \chi - 1$ when $\chi \geq 2$. The same boundedness result was also obtained in \cite{Zhao+Zheng} for the fully parabolic case when $n=2$ with the same restricting for $r$ and $\chi$. In higher dimensions, for the parabolic-elliptic case, it was shown in \cite{Kurt+Shen} that when $\mu$ is sufficiently large, solutions exist globally in time. Moreover, solutions are globally bounded when $r$ is sufficiently large and the initial condition is not small. For the fully parabolic case, it was proven in \cite{MWS} in 2019 that the logistic damping term, $-\mu u^\gamma$ with $\gamma>2$, can prevent the occurrence of finite-time blow-up solutions, leaving the question of whether solutions remain bounded or blow up at infinity.\\

\textbf{Weak Singular Sensitivity with Logistic Source:} The weak singular sensitivity, given by $\frac{\chi }{v^k}$ with $k \in (0,1)$, was first studied in \cite{Nagai+Senba} without the logistic source and with $\kappa=0$. The authors demonstrated that solutions remain globally bounded in two dimensions under the assumption of radially symmetric initial data. In \cite{zh}, the author investigated weak singular sensitivity with a logistic source in a parabolic-elliptic system and showed that solutions remain globally bounded when $\mu$ is sufficiently large in two dimensions. Notably, in that work, establishing a positive lower bound for $v$ was not necessary to ensure global boundedness. More recently, this large-$\mu$ assumption was shown to be removable in \cite{Minh6}. Furthermore, a sub-logistic source of the form $-\frac{\mu u}{\ln^\gamma(u+e)}$ with $\gamma \in (0,1)$ was proven to be strong enough to prevent blow-up. Given these findings, it is natural to ask a similar question:  
\textit{"Can a logistic source prevent blow-up in a fully parabolic system with weak singular sensitivity?"}\\

In this paper, we give a partial answer to this question by showing that the logistic source is, in fact, strong enough to prevent blow-up in dimension two. To be more precise, our main result is as follows: 

\begin{theorem} \label{thm1}
For  $\kappa=1$, $\chi>0$, \( r > 0 \), \( \mu > 0 \), \( k \in (0,1) \), \( \alpha > 0 \), and \( \beta > 0 \), the system \eqref{1}, subject to the initial condition \eqref{initial} and the boundary conditions \eqref{bdry}, admits a unique solution \( (u,v) \) such that  
\[
u, v \in C^0 \left(\bar{\Omega} \times [0,\infty)\right) \cap C^{2,1} \left(\bar{\Omega} \times (0,\infty)\right).
\]  
Moreover, \( u \) and \( v \) are strictly positive in \( \bar{\Omega} \times (0, \infty) \), and \( u \) is uniformly bounded in \( \Omega \times (0, \infty) \).
\end{theorem}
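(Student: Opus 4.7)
My plan is to follow the standard blow-up-prevention paradigm for fully parabolic chemotaxis systems with logistic damping. First I would invoke Amann's parabolic fixed-point theory to obtain a unique local classical solution $(u,v)$ on a maximal interval $[0,T_{\max})$ satisfying the extensibility criterion that $T_{\max}<\infty$ forces $\|u(\cdot,t)\|_{L^\infty(\Omega)}\to\infty$; strict positivity of $u$ and $v$ on $(0,T_{\max})$ is automatic from the parabolic maximum principle. Integrating the first equation and absorbing $r\int u$ into the quadratic damping via Cauchy--Schwarz yields a uniform $L^1$ bound on $u$, and feeding this into the Duhamel formula for the Neumann heat semigroup on the second equation, together with the strict positivity of $v_0$ and pointwise lower bounds for the Neumann heat kernel, produces a uniform strictly positive pointwise bound $v(x,t)\geq v_\ast>0$ on $\bar\Omega\times[0,T_{\max})$. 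This lower bound is the decisive consequence of the weak-singularity hypothesis $k\in(0,1)$: from here on $v^{-k}\leq v_\ast^{-k}$ may be treated as a bounded coefficient.

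The central step is an $L\log L$ entropy estimate for $u$. Testing the first equation in \eqref{1} with $\ln u+1$ and applying Young's inequality produces
\[
\frac{d}{dt}\int_\Omega u\ln u+\frac{1}{2}\int_\Omega\frac{|\nabla u|^2}{u}\leq \frac{\chi^2}{2v_\ast^{2k}}\int_\Omega u|\nabla v|^2+\int_\Omega(\ln u+1)(ru-\mu u^2),
\]
whose last term contributes a strong dissipative piece $-\mu\int u^2\ln u$. The remaining task is to dominate $\int u|\nabla v|^2$ by terms that can be absorbed into the Fisher information $\int|\nabla u|^2/u$, the super-quadratic damping, and the already-established $L^1$ bound. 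My approach is to couple the above identity with an evolution inequality for $\int|\nabla v|^2$ obtained by testing the $v$-equation with $-\Delta v$ (which contributes the good dissipation $\int|\Delta v|^2$ in exchange for $\beta^2\int u^2$), and then to control $\int u|\nabla v|^2\leq\|u\|_{L^2}\|\nabla v\|_{L^4}^2$ through the two-dimensional Gagliardo--Nirenberg inequality and maximal Sobolev regularity for $v$. Working with the combined functional $y(t)=\int_\Omega u\ln u+\lambda\int_\Omega|\nabla v|^2$ for a suitably chosen $\lambda>0$, I expect to close the estimate into a differential inequality $y'+\delta y\leq C$, yielding uniform $L\log L$ control of $u$ together with $L^\infty(0,\infty;H^1)$ control of $v$.

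Once the $L\log L$ bound on $u$ is in hand, a standard testing of the first equation against $u^{p-1}$, again exploiting $v\geq v_\ast$, two-dimensional Gagliardo--Nirenberg, and parabolic regularity for the $v$-equation, promotes integrability of $u$ to every $L^p$ with $p<\infty$, the resulting drift contributions being absorbed into $-\mu\int u^{p+1}$. A Moser--Alikakos iteration (or equivalently a semigroup argument on the variation-of-constants formula for $u$) then upgrades this to a uniform $L^\infty$ bound, contradicting the extensibility criterion and forcing $T_{\max}=\infty$. The main obstacle I anticipate is the entropy step: the Gagliardo--Nirenberg exponents must line up in precisely the critical two-dimensional fashion for $\int u|\nabla v|^2$ to be absorbed simultaneously against the Fisher information and the super-quadratic logistic term, and the coupling with the $\int|\nabla v|^2$ functional has to be tuned quantitatively. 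Once this closure is secured, the remaining bootstrap is essentially routine.
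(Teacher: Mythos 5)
Your proposal stands or falls on the claim that the $L^1$ bound for $u$, combined with the strict positivity of $v_0$ and Neumann heat-kernel lower bounds, yields a uniform strictly positive pointwise lower bound $v(x,t)\geq v_\ast>0$ on $\bar\Omega\times[0,T_{\max})$. This is the step that fails, and it is precisely the obstacle the paper is engineered to avoid. The $L^1$ estimate in Lemma \ref{L1-est} is an \emph{upper} bound $\int_\Omega u\leq m$; to push a time-uniform positive lower bound onto $v$ through the Duhamel formula for $v_t=\Delta v-\alpha v+\beta u$, one needs a time-uniform positive \emph{lower} bound on $\int_\Omega u(\cdot,t)$, since the contribution of $v_0$ decays like $e^{-\alpha t}$. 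The logistic term $-\mu u^2$ destroys mass conservation and no such lower bound on $\int_\Omega u$ is established (nor is it an easy corollary of the logistic ODE comparison, since $\int_\Omega u^2$ is not controlled from above by $\int_\Omega u$). The paper flags exactly this in its ``Difficulties and Resolutions'' paragraph, warning that the logistic source raises the possibility that $v\to0$, and cites \cite{zh} specifically for the observation that in this weak-singular regime one must proceed \emph{without} a lower bound on $v$. Your subsequent $L\log L$ and $L^p$ steps all replace $v^{-k}$ by the constant $v_\ast^{-k}$, so the gap propagates through the whole argument.

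The paper's route circumvents the missing lower bound rather than establishing it. For the $L\log L$ step it works with $\int_\Omega u\ln u-\varepsilon\int_\Omega u\ln v+\tfrac12\int_\Omega|\nabla v|^2$ (your functional omits the middle term); differentiating $-\int_\Omega u\ln v$ (Lemma \ref{M}) produces the crucial favorable terms $-\tfrac12\int_\Omega\frac{u}{v^2}|\nabla v|^2$ and $-\beta\int_\Omega\frac{u^2}{v}$, which absorb the singular cross-diffusion contribution without any pointwise information on $v$. For the $L^p$ bootstrap it differentiates $\int_\Omega u^p v^{-q}+\int_\Omega u^p+\int_\Omega|\nabla v|^{2p}$ with $0<q<p-1$; the weighted quantity $\int u^pv^{-q}$ generates the dissipative term $-q(q+1)\int u^pv^{-q-2}|\nabla v|^2$ strong enough to dominate all $v$-singular pieces, and the restriction $q<p-1$ is exactly what makes the competing positive term absorbable. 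Testing against $\ln u+1$ and $u^{p-1}$ alone, as you propose, is the correct strategy only once $v\geq v_\ast$ is in hand; here you would need either to adopt the paper's weighted functionals or to supply a genuine proof that $\int_\Omega u$ is bounded away from zero, which your sketch does not contain.
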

\begin{remark}
   Our method does not apply if we replace the logistic source, $-\mu u^2$, with a sub-logistic one of the form $-\frac{\mu u^2}{\ln^\gamma(u + e)}$, where $\gamma \in (0,1)$.
\end{remark}

\textbf{Difficulties and Resolutions:} The presence of a logistic source in the system \eqref{1} can destroy the mass conservation property of the solutions, consequently raising the possibility that $v$ approaches $0$ as $t \to \infty$. In this paper, we offer an approach to overcome this obstacle by considering the energy functional  
\[
y(t)= \int_\Omega u\ln u - \lambda \int_\Omega u \ln v + \frac{1}{2}\int_\Omega |\nabla v|^2,
\]
where $\lambda>0$ is sufficiently small. Using this, we obtain an $L\ln L$ bound for $u$ without relying on a lower bound for $v$. 

However, transitioning from this bound to $L^p$ bounds for $p>1$ presents another challenge: $v$ can still be arbitrarily close to $0$. To address this, we consider the energy functional  
\[
z(t) = \int_\Omega u^p v^{-q} + \int_\Omega u^p + \int_\Omega |\nabla v|^{2p},
\]
where $0<q<p-1$, which allows us to absorb the singularity of $v$ near $0$ into the diffusion.

\textbf{Outline of the paper:} In Section \ref{s2}, we establish the local existence of solutions as well as some inequalities that will be frequently used in the subsequent sections. In the next section, we derive several a priori estimates, including an $L\ln L$ bound and $L^p$ bounds for $p>1$. Finally, we prove the main result in Section \ref{s4}.

\section{Preliminaries} \label{s2}
In this section, we establish the local well-posedness of solutions to the system \eqref{1}, recall a parabolic regularity result in Sobolev spaces, and introduce some useful inequalities for later applications in the subsequent sections. We begin with the following lemma:
\begin{lemma} \label{local-exist}
    Let $\kappa=1$, $\chi>0$, \( r > 0 \), \( \mu > 0 \), \( k \in (0,1) \), \( \alpha > 0 \), and \( \beta > 0 \), and assume that the initial condition \eqref{initial} holds. Then, there exists a maximal existence time \( T_{\rm max} \in (0, \infty] \) and a unique pair of functions \( u \) and \( v \) satisfying  
    \[
    u, v \in C^0 \left(\bar{\Omega} \times [0,T_{\rm max})\right) \cap C^{2,1} \left(\bar{\Omega} \times (0,T_{\rm max})\right),
    \]  
    which solve the system \eqref{1} with the boundary conditions \eqref{bdry} in the classical sense. Moreover, both \( u \) and \( v \) remain strictly positive in \( \bar{\Omega} \times (0,T_{\rm max}) \). If \( T_{\rm max} < \infty \), then  
    \begin{align} \label{local-exist-1}
        \limsup_{t \to T_{\rm max}} \| u(\cdot,t) \|_{L^\infty(\Omega)} = \infty.
    \end{align}  
\end{lemma}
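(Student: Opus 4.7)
The plan is the standard three-step argument for chemotaxis systems with singular sensitivity: local existence via a Banach fixed point in a space where the prescribed $v$ stays bounded away from zero, strict positivity via parabolic comparison principles, and the extensibility criterion via parabolic bootstrap.

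For local existence, let $\delta_0 := \min_{\bar\Omega} v_0 > 0$ and, for $T > 0$ to be chosen small, define
\[
\mathcal{S}_T := \bigl\{ \tilde v \in C^0(\bar\Omega \times [0,T]) : \tilde v(\cdot,0) = v_0, \ \tfrac{\delta_0}{2} \le \tilde v \le 2\|v_0\|_{L^\infty(\Omega)} + 1 \bigr\},
\]
equipped with the sup norm. For $\tilde v \in \mathcal{S}_T$, the factor $\tilde v^{-k}$ is uniformly bounded and continuous; hence the first equation of \eqref{1} (with $\tilde v$ in place of $v$) is a semilinear parabolic equation for $u$ whose drift $-\chi\tilde v^{-k}\nabla \tilde v$ and reaction $ru-\mu u^2$ are regular enough for classical solvability by standard parabolic theory (in the spirit of Amann's work on abstract parabolic systems). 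Feeding the resulting $u$ into $v_t = \Delta v - \alpha v + \beta u$ produces $v \in C^0 \cap C^{2,1}$, and short-time parabolic estimates show that for $T$ sufficiently small the map $\Phi : \tilde v \mapsto v$ sends $\mathcal{S}_T$ into itself and is a strict contraction. Its fixed point is the desired local solution. Uniqueness follows by subtracting two solutions, testing the differences in $L^2$, and closing a Gr\"onwall loop using the lower bound on $v$ to control the singular factor.

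For positivity, expanding the divergence in the $u$-equation yields
\[
u_t - \Delta u + \chi v^{-k}\nabla v \cdot \nabla u = \bigl(r - \chi \nabla\cdot(v^{-k}\nabla v) - \mu u\bigr) u,
\]
whose coefficients are bounded on every compact subinterval of $[0,T_{\max})$; the strong maximum principle together with the Hopf lemma under the Neumann condition then forces $u > 0$ on $\bar\Omega \times (0,T_{\max})$, since $u_0 \not\equiv 0$ by \eqref{initial}. For $v$, the identity $v_t - \Delta v + \alpha v = \beta u \ge 0$ with $v(\cdot,0) \ge \delta_0$ is compared against the spatially constant function $w(t) = \delta_0 e^{-\alpha t}$ (which satisfies the Neumann condition trivially), yielding $v(x,t) \ge \delta_0 e^{-\alpha t} > 0$ on $\bar\Omega \times [0,T_{\max})$.

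For the extensibility criterion \eqref{local-exist-1}, suppose $T_{\max} < \infty$ and that $\|u(\cdot,t)\|_{L^\infty(\Omega)}$ remains bounded on $[0,T_{\max})$. Then $L^p$ parabolic regularity applied to the $v$-equation (with bounded source $\beta u$) provides a uniform $W^{1,\infty}$ bound on $v$ on $[0,T_{\max})$; combined with the strict lower bound $v \ge \delta_0 e^{-\alpha T_{\max}}$, the coefficient $v^{-k}\nabla v$ stays in $L^\infty$, so Schauder estimates give uniform H\"older regularity for $u$ up to $T_{\max}$. Using these controlled values at $t = T_{\max} - \varepsilon$ as new initial data, the fixed-point construction extends the solution beyond $T_{\max}$, contradicting maximality. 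The main obstacle throughout is keeping $v^{-k}$ under control, which is why the fixed-point space carries the built-in barrier $\delta_0/2$ and why the explicit comparison bound $v \ge \delta_0 e^{-\alpha t}$ is essential at every subsequent stage.
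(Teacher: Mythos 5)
The paper's own ``proof'' is a one-line citation to \cite{Zhao+Zheng}[Lemma 2.2], whose content is exactly the standard package you describe: contraction mapping, maximum principle for positivity, and bootstrap for the extensibility criterion. Your positivity argument (the comparison $v \ge \delta_0 e^{-\alpha t}$ and the strong maximum principle plus Hopf lemma for $u$, using $u_0\not\equiv 0$) and your extensibility argument are both correct and match the standard treatment.

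However, there is a genuine gap in the fixed-point construction: you define the iteration space $\mathcal{S}_T$ as a subset of $C^0(\bar\Omega\times[0,T])$ with only pointwise bounds, but the very first step of the map $\Phi$ requires inserting $\tilde v^{-k}\nabla\tilde v$ into the drift of the $u$-equation, and $\nabla\tilde v$ does not exist for a generic $\tilde v\in C^0$. With only a $C^0$ bound on $\mathcal{S}_T$, the map is not even defined, and the sup-norm contraction estimate cannot control the gradient term in the difference of two drifts. Two standard repairs exist. Either equip $\mathcal{S}_T$ with a $W^{1,\infty}$ (or $C^{1+\theta}$) bound compatible with $v_0\in W^{1,\infty}(\Omega)$, so the drift is well-defined and Lipschitz in the relevant norm; or, more commonly in the chemotaxis literature and likely what the cited reference does, run the fixed point on $u$: fix $\tilde u\in C^0(\bar\Omega\times[0,T])$ with an $L^\infty$ bound, solve the linear parabolic $v$-equation with source $\beta\tilde u$ (gaining $\nabla v\in L^\infty$ by parabolic smoothing and preserving a lower bound for $v$ by comparison), then solve the resulting semilinear $u$-equation and show $\tilde u\mapsto u$ is a contraction for small $T$. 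The second route avoids the regularity bookkeeping entirely because the gradient is always attached to the output of a smoothing step, never to the iterate. With this correction your argument is complete and consistent with the reference the paper invokes.
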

\begin{proof}
The proof is based on a standard fixed-point argument in Banach spaces; for a detailed exposition, we refer readers to \cite{Zhao+Zheng}[Lemma 2.2] to avoid redundancy.
\end{proof}

Henceforth, we denote $(u,v)$ as the unique solution of system \eqref{1} in $\Omega \times (0, T_{\rm max})$, where $T_{\rm max} \in (0, \infty]$, as established in the preceding lemma. The following lemma, a parabolic regularity result in Sobolev spaces, will be applied later in Lemma \ref{Lp-v} and in the proof of Theorem \ref{1}.

\begin{lemma} \label{C52.Para-Reg}
Let $\Omega \subset \mathbb{R}^2$ be an open bounded domain with smooth boundary. Assume that $p\geq 1$ and $q \geq 1$ satisfy 
\begin{equation*}
    \begin{cases}
     q &< \frac{2p}{2-p},  \qquad \text{when } p<2,\\
     q &< \infty, \qquad \text{when } p=2,\\
      q &= \infty, \qquad \text{when } p>2.\\
     \end{cases}
\end{equation*}
Assuming $V_0 \in W^{1,q}(\Omega)$ and $V$ is a classical solution to the following system
\begin{equation}\label{C52.parabolic-equation}
    \begin{cases}
     V_t = \Delta V  - a V + f &\text{in } \Omega \times (0,T), \\ 
\frac{\partial V}{\partial \nu} =  0 & \text{on }\partial \Omega \times (0,T),\\ 
 V(\cdot,0)=V_0   & \text{in } \Omega
    \end{cases}
\end{equation}
where $a>0$ and $T\in (0,\infty]$. If $f \in L^\infty \left ( (0,T);L^p(\Omega) \right ) $, then $V  \in L^\infty \left ( (0,T);W^{1,q}(\Omega) \right )$.
\end{lemma}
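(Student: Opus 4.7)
The plan is to combine Duhamel's formula with standard smoothing estimates for the Neumann heat semigroup on $\Omega \subset \mathbb{R}^2$. Writing $e^{t\Delta_N}$ for that semigroup, the variation-of-constants representation reads
\begin{equation*}
V(t) = e^{-at}e^{t\Delta_N}V_0 + \int_0^t e^{-a(t-s)}e^{(t-s)\Delta_N}f(s)\,ds.
\end{equation*}
For the initial-data contribution, the Neumann heat semigroup preserves $W^{1,q}$-regularity, so $\|e^{-at}e^{t\Delta_N}V_0\|_{W^{1,q}(\Omega)} \leq Ce^{-at}\|V_0\|_{W^{1,q}(\Omega)}$, which is uniformly bounded in $t$. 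The task therefore reduces to controlling the Duhamel integral in $W^{1,q}$.

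For that I would invoke the standard Neumann-heat-semigroup smoothing estimates (e.g.\ the well-known $L^p$--$L^q$ collection in Winkler's works): for $1 \leq p \leq q \leq \infty$ and $g \in L^p(\Omega)$,
\begin{equation*}
\|\nabla e^{t\Delta_N}g\|_{L^q(\Omega)} \leq C_1\bigl(1 + t^{-\frac{1}{2} - (\frac{1}{p} - \frac{1}{q})}\bigr)\|g\|_{L^p(\Omega)},
\end{equation*}
valid in two spatial dimensions (the general-$n$ prefactor exponent $\frac{n}{2}(\frac{1}{p} - \frac{1}{q})$ specializes to $\frac{1}{p} - \frac{1}{q}$ here). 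Together with $\sup_{0 \leq s < T}\|f(s)\|_{L^p(\Omega)} < \infty$ and the substitution $\sigma = t-s$, this gives
\begin{equation*}
\|\nabla V(t)\|_{L^q(\Omega)} \leq Ce^{-at}\|V_0\|_{W^{1,q}(\Omega)} + C\|f\|_{L^\infty((0,T);L^p)} \int_0^t e^{-a\sigma}\bigl(1 + \sigma^{-\frac{1}{2} - (\frac{1}{p} - \frac{1}{q})}\bigr)\,d\sigma.
\end{equation*}
The integrand is integrable at $\sigma = 0$ precisely when $\frac{1}{2} + \frac{1}{p} - \frac{1}{q} < 1$, i.e.\ $\frac{1}{p} - \frac{1}{q} < \frac{1}{2}$, while the factor $e^{-a\sigma}$ tames the tail and yields a bound uniform in $t \in (0,T)$, including $T = \infty$. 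The analogous $L^q$-estimate for $V$ itself uses the parallel $L^p$--$L^q$ smoothing without the $\frac{1}{2}$, whose integrability threshold $\frac{1}{p} - \frac{1}{q} < 1$ is automatic.

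The only genuine verification is that the case-by-case hypothesis on $(p,q)$ coincides with $\frac{1}{p} - \frac{1}{q} < \frac{1}{2}$: rearranging gives $\frac{1}{q} > \frac{2-p}{2p}$, which for $p<2$ reads $q<\frac{2p}{2-p}$, for $p=2$ reads $q<\infty$, and for $p>2$ permits $q=\infty$, matching the statement exactly. The rest is essentially bookkeeping; the approach places no delicate obstruction in the way, provided one can quote the semigroup estimates cleanly.
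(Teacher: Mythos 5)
Your proposal is correct and follows essentially the same route as the paper, which handles this lemma by simply citing the standard $L^p$--$L^q$ estimates for the Neumann heat semigroup (deferring to \cite{Winkler-Horstmann}, Lemma 4.1) rather than carrying out the computation. You have reproduced exactly the Duhamel-plus-smoothing argument that reference encapsulates, including the correct identification of the integrability threshold $\tfrac{1}{p}-\tfrac{1}{q}<\tfrac12$ with the stated case distinctions on $(p,q)$.
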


\begin{proof}
The proof relies on standard \( L^p \)-\( L^q \) estimates for the Neumann heat kernel; for a detailed proof, we refer readers to \cite{Winkler-Horstmann}[Lemma 4.1].
\end{proof}

The following lemma provides a useful differential inequality, which will be applied later in Lemma \ref{gradv}.

\begin{lemma} \label{ODI}
    Assume that nonnegative function $y \in C^1([0,T))$ where $T \in (0, \infty]$ satisfies 
    \begin{align*}
         \int_t ^{t+\tau} y(s) \,ds  \leq L_1,\qquad \text{for all }t\in (0,T-\tau),
    \end{align*}
    where  $\tau= \min \left \{1, \frac{T}{2} \right \}$ and $L_1>0$, and 
    \begin{align*}
         y'(t) \leq h(t)y(t)+g(t), \qquad \text{for all } t \in (0,T),
    \end{align*}
    where $h$ and $g $ are nonnegative continuous functions in $[0,T)$ such that 
    \begin{align*}
        \int_t ^{t+\tau} h(s) \,ds  \leq L_2, \quad \text{and } \int_t ^{t+\tau} g(s) \,ds  \leq L_3, \quad \text{for all }t\in (0,T-\tau),
    \end{align*}
    where $L_2>0$, and $L_3>0$. Then there exists $C>0$ such that $y(t) \leq C$ for all $t \in (0,T)$.
\end{lemma}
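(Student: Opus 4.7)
The plan is to combine an averaging step that extracts a good anchor point from the integral bound on $y$ with Gronwall's inequality applied on a window of length at most $\tau$. Fix any $t^* \in (0, T)$ and consider two cases.

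If $t^* > \tau$, then $t^*-\tau \in (0, T-\tau)$, so the hypothesis yields $\int_{t^*-\tau}^{t^*} y(s)\,ds \leq L_1$. By the mean value theorem for integrals (applied to the continuous function $y$), there exists a point $t_0 \in [t^*-\tau, t^*]$ with $y(t_0) \leq L_1/\tau$. If instead $t^* \leq \tau$, I simply take $t_0 = 0$, at which $y(t_0) = y(0)$ is a fixed finite constant coming from the assumed regularity $y \in C^1([0,T))$.

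Next, integrating the differential inequality $y' \leq h\,y + g$ from $t_0$ to $t^*$ by Gronwall's inequality gives
\[
y(t^*) \leq y(t_0)\exp\!\left(\int_{t_0}^{t^*} h(s)\,ds\right) + \int_{t_0}^{t^*} g(s)\exp\!\left(\int_{s}^{t^*} h(\sigma)\,d\sigma\right)\,ds.
\]
Since $t^* - t_0 \leq \tau$, the window $[t_0, t^*]$ is contained in some interval of length $\tau$ on which the hypotheses give $\int h \leq L_2$ and $\int g \leq L_3$ (the endpoint case $t_0 = 0$ is handled by continuity of $h$ and $g$, passing to the limit in $\int_\varepsilon^{\varepsilon + \tau} h \leq L_2$). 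Therefore every exponential factor is dominated by $e^{L_2}$, and combining the two cases one obtains the uniform bound
\[
y(t^*) \leq \bigl(\max\{y(0),\, L_1/\tau\} + L_3\bigr)\,e^{L_2} =: C,
\]
which is what was claimed.

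I do not foresee any substantive obstacle here; this is essentially a textbook Gronwall-plus-averaging argument, and the only minor care is the continuity extension at $t_0 = 0$ and making sure that $t^*$ is actually accessible by a window of length $\tau$ starting inside $(0, T-\tau)$ — both are immediate from the definition $\tau = \min\{1, T/2\}$.
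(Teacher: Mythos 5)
Your proof is correct and follows essentially the same strategy as the paper: use the time-averaged bound on $y$ to locate an anchor point $t_0$ with $y(t_0)\leq L_1/\tau$ near the target time, then apply Gronwall on the window of length at most $\tau$ and control the exponential and source integrals via the hypotheses on $h$ and $g$. You are somewhat more careful than the paper about the case $t^*\leq\tau$ (where the paper leaves the bootstrap from continuity implicit) and about the direction of the Gronwall window, but these are presentational refinements rather than a different argument.
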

\begin{proof}
    Since $ \sup_{t \in (0, T-\tau)}\int_t ^{t+\tau }y(s)\, ds \leq L_1$, for any $t \in (0, T-\tau)$ there exists $t_0 \in (t, t+\tau)$ depending on $t$ such that 
  \begin{align*}
      y(t_0) \leq \frac{L_1}{\tau}.
  \end{align*}
  Multiplying $e^{- \int_{t_0}^t h(s)\, ds}$ to the inequality $y'(t) \leq h(t)y(t)+g(t)$ and integrating from $0$ to $t_0$ deduces that 
  \begin{align}
      y(t) &\leq y(t_0)e^{  \int_{t_0}^t h(s)\, ds} + \int_{t_0}^t e^{\int_{s}^t h(z)\, dz} g(s)\, ds \notag \\
      &\leq \frac{L_1 e^{L_2}}{\tau}+ L_3 e^{L_2},
  \end{align}
  which completes the proof.
\end{proof}

The next lemma, derived directly from \cite{Winkler_preprint}[Corollary 1.2], provides a modified version of the Gagliardo–Nirenberg interpolation inequality, which will later be used to obtain $L^p$ bounds for $u$ in Lemma \ref{Lp}.

\begin{lemma}\label{C52.ILGN}
Let $\Omega \subset \mathbb{R}^2$ be a bounded domain with a smooth boundary, and suppose $m > 0$ and $\sigma > \xi \geq 0$. Then, for every $\varepsilon > 0$, there exists a constant $C = C(\varepsilon, \xi, \sigma) > 0$ such that the inequality  
\begin{align}\label{C52.ILGN.1}
    \int_\Omega \phi^{m+1} \ln^{\xi}(\phi+e) \,dx 
    &\leq \varepsilon \left( \int_\Omega \phi \ln^{\sigma}(\phi+e) \,dx \right) 
    \left( \int_\Omega |\nabla \phi^{\frac{m}{2}}|^2 \,dx \right) \notag \\
   & \quad + \varepsilon \left( \int_\Omega \phi \,dx \right)^m 
    \left( \int_\Omega \phi \ln^{\sigma}(\phi+e) \,dx \right) + C,
\end{align}
holds for any nonnegative function $\phi \in C^1(\bar{\Omega})$.
\end{lemma}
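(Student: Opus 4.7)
I would derive the inequality as a logarithmic refinement of the standard two-dimensional Gagliardo--Nirenberg estimate. Setting $\psi := \phi^{m/2}$, the classical GN inequality in dimension two gives
\[
\int_\Omega \phi^{m+1}\, dx \;=\; \int_\Omega \psi^{2+2/m}\, dx \;\leq\; c_1 \|\nabla \psi\|_{L^2}^2 \|\psi\|_{L^{2/m}}^{2/m} + c_1 \|\psi\|_{L^{2/m}}^{2+2/m} \;=\; c_1 \Bigl(\int_\Omega \phi\Bigr) \int_\Omega |\nabla \phi^{m/2}|^2 + c_1 \Bigl(\int_\Omega \phi\Bigr)^{m+1},
\]
which is precisely the unweighted ancestor of the target inequality. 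The refinement I have to produce consists of inserting $\ln^\xi(\phi+e)$ on the left and, in exchange, upgrading the $L^1$ factors $\int_\Omega \phi$ on the right to the weighted norm $\int_\Omega \phi \ln^\sigma(\phi+e)$, accepting an additive constant $C$ and an arbitrarily small prefactor $\varepsilon$.

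\textbf{Truncation.} Next I would split $\Omega = \{\phi \leq K\} \cup \{\phi > K\}$ for a parameter $K>0$ to be chosen. On the sublevel set the integrand is pointwise bounded by $K^m \ln^\xi(K+e)\,\phi$, so that portion of the left-hand side is a constant depending on $K$, which can be absorbed into $C$. On the superlevel set I would exploit the strict gap $\sigma > \xi$ via
\[
\ln^\xi(\phi+e) \leq \ln^{\xi-\sigma}(K+e)\,\ln^\sigma(\phi+e),
\]
valid since $\xi - \sigma < 0$ and $\phi > K$. This trades the $\ln^\xi$ weight for the stronger $\ln^\sigma$ weight at the price of the small factor $\ln^{\xi-\sigma}(K+e)$, which tends to $0$ as $K \to \infty$. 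After this step it is enough to produce a constant $c_2$ independent of $K$ such that the weighted interpolation
\[
\int_\Omega \phi^{m+1}\ln^\sigma(\phi+e)\, dx \;\leq\; c_2 \Bigl(\int_\Omega \phi \ln^\sigma(\phi+e)\Bigr) \int_\Omega |\nabla \phi^{m/2}|^2 + c_2 \Bigl(\int_\Omega \phi\Bigr)^m \Bigl(\int_\Omega \phi \ln^\sigma(\phi+e)\Bigr)
\]
holds; then choosing $K$ so large that $c_2 \ln^{\xi-\sigma}(K+e) \leq \varepsilon$ closes the argument.

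\textbf{Main obstacle.} The genuine work lies in the weighted Gagliardo--Nirenberg estimate of the last display. In two dimensions $W^{1,2}(\Omega)$ embeds into every $L^q(\Omega)$ with $q<\infty$, but the embedding constant grows with $q$, and what is needed is precisely a control of that growth sufficient to convert a positive power of $\phi$ into a positive power of $\ln(\phi+e)$. The way I would carry this out is to apply H\"older's inequality in the form $\phi^{m+1}\ln^\sigma(\phi+e) = \bigl(\phi \ln^\sigma(\phi+e)\bigr) \cdot \phi^m$, treating the second factor as $\psi^2$ with $\psi = \phi^{m/2}$ and interpolating $\|\psi\|_{L^{2q}}$ between $\|\nabla \psi\|_{L^2}$ and $\|\psi\|_{L^{2/m}}$ through the standard GN inequality; choosing $q$ so that the $L^{q/(q-1)}$-norm of $\phi \ln^\sigma(\phi+e)$ reduces to its $L^1$-norm up to a logarithmic correction requires a Trudinger--Moser type estimate, which is exactly the two-dimensional ingredient underlying Winkler's Corollary~1.2. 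Once that log-weighted interpolation is granted, the truncation-and-choose-$K$ step above makes the remainder of the proof routine.
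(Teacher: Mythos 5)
Your truncation argument is tidy and logically sound as a \emph{reduction}: splitting $\Omega$ into $\{\phi\le K\}$ and $\{\phi>K\}$, bounding the sublevel part by $K^{m+1}\ln^\xi(K+e)|\Omega|$, and using $\ln^\xi(\phi+e)\le\ln^{\xi-\sigma}(K+e)\ln^\sigma(\phi+e)$ on the superlevel set, you would indeed deduce the lemma if you had in hand the ``$\xi=\sigma$'' estimate
\[
\int_\Omega \phi^{m+1}\ln^\sigma(\phi+e)\le c_2\Bigl(\int_\Omega\phi\ln^\sigma(\phi+e)\Bigr)\int_\Omega|\nabla\phi^{m/2}|^2 + c_2\Bigl(\int_\Omega\phi\Bigr)^m\int_\Omega\phi\ln^\sigma(\phi+e)
\]
with a \emph{fixed} constant $c_2$ independent of $\varepsilon$. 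But that display is precisely where the genuine mathematical content lies, and you do not prove it; you defer it to a ``Trudinger--Moser type estimate'' and assert it is ``exactly'' the content of Winkler's Corollary~1.2.

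That identification is not correct, and this is the gap. Winkler's Corollary~1.2 (and the restatements in Le's prior paper, which is what this paper actually cites for the proof) \emph{is} the lemma you are asked to prove: it carries the strict gap $\sigma>\xi$ and the arbitrarily small prefactor $\varepsilon$. The ``$\xi=\sigma$'' estimate you need is a strictly \emph{stronger} statement: it implies the lemma via your truncation, whereas the lemma does not imply it (one cannot let $\xi\uparrow\sigma$ because $C(\varepsilon,\xi,\sigma)$ is not controlled in that limit). So you have reduced the target to a harder unverified claim, and then pointed back at the target to justify the harder claim. The actual route in the cited references does not pass through a same-log-exponent interpolation; it exploits the gap $\sigma>\xi$ from the start by choosing a truncation level that depends on $\phi$ itself (roughly on the ratio $\int\phi\ln^\sigma(\phi+e)/\int\phi$) and then running a Moser--Trudinger-flavoured argument on the superlevel set, which is what actually produces the arbitrarily small prefactor. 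To make your outline into a proof you would either have to carry out that $\phi$-dependent truncation directly, or give an independent proof of the ``$\xi=\sigma$'' inequality above -- and neither of these is supplied by invoking Winkler's Corollary~1.2, which is the conclusion rather than the ingredient.

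Two smaller points for completeness. First, for the sublevel set you should bound $\phi^{m+1}\ln^\xi(\phi+e)\le K^{m+1}\ln^\xi(K+e)$ pointwise and use $|\Omega|<\infty$, rather than the intermediate bound $K^m\ln^\xi(K+e)\,\phi$, since a term proportional to $\int_\Omega\phi$ has no home on the right-hand side of the target inequality. Second, the paper's own ``proof'' is a citation to Lemmas~2.4--2.5 of \cite{Minh5} (which in turn rest on \cite{Winkler_preprint}); so the fair comparison is against the argument in those sources, which, as noted above, does not proceed through your intermediate estimate.
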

\begin{proof}
   For detailed proofs, we refer the reader to Lemmas 2.4 and 2.5 in \cite{Minh5}.
\end{proof}

\section{A Priori Estimates}\label{s3}
In this section, we present some important estimates for solutions, especially we derive an \( L \ln L \) bound and \( L^p \) bounds for \( u \). Let us begin with an $L^1$ bound as in the following lemma:
\begin{lemma} \label{L1-est}
    There exist $m>0$ and $C>0$ such that
    \begin{align*}
        \int_\Omega u(\cdot,t) \leq m, \qquad \text{for all }t \in (0, T_{\rm max})
    \end{align*}
    and 
    \begin{align*}
      \int_t ^{t+\tau}   \int_\Omega u^2(\cdot,t) \leq C, \qquad \text{for all }t \in (0, T_{\rm max}-\tau), 
    \end{align*}
    where $\tau = \min \left \{ \frac{T_{\rm max}}{2},1 \right \}$.
\end{lemma}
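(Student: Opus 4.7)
The plan is to integrate the first equation of \eqref{1} over $\Omega$, exploit the homogeneous Neumann boundary condition to kill the diffusion and the chemotactic flux, and then close the resulting ODI in the logistic shape $y' \le ry - c y^2$ by Cauchy-Schwarz.

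Concretely, I would first set $y(t) := \int_\Omega u(\cdot, t)$. Using the divergence theorem together with \eqref{bdry} makes both $\int_\Omega \Delta u$ and $\chi \int_\Omega \nabla \cdot (u v^{-k} \nabla v)$ vanish, leaving the mass identity
\begin{align*}
    y'(t) = r \int_\Omega u - \mu \int_\Omega u^2, \qquad t \in (0, T_{\rm max}).
\end{align*}
By the Cauchy–Schwarz inequality one has $\int_\Omega u^2 \geq |\Omega|^{-1} y(t)^2$, so
\begin{align*}
    y'(t) \leq r y(t) - \frac{\mu}{|\Omega|} y(t)^2, \qquad t \in (0, T_{\rm max}).
\end{align*}
A standard ODE comparison with the logistic equation then yields $y(t) \leq \max\bigl\{\int_\Omega u_0, \tfrac{r|\Omega|}{\mu}\bigr\} =: m$, which gives the first claimed bound.

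For the space-time $L^2$ bound, I would go back to the mass identity (without throwing away the $-\mu u^2$ term) and integrate in time over $(t,t+\tau)$:
\begin{align*}
    \mu \int_t^{t+\tau} \int_\Omega u^2 \, ds = y(t) - y(t+\tau) + r \int_t^{t+\tau} y(s)\, ds \leq m + r\tau m,
\end{align*}
where the final estimate uses the already established pointwise-in-time $L^1$ bound $y \leq m$ together with the nonnegativity of $u$. Dividing by $\mu$ produces the constant $C = \mu^{-1}(m + r \tau m)$, which is independent of $t \in (0, T_{\rm max} - \tau)$.

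There is no real obstacle in this argument; the only point requiring mild care is verifying that the chemotactic term integrates to zero under Neumann boundary conditions, which follows immediately since $u v^{-k} \nabla v \cdot \nu = u v^{-k} \partial_\nu v = 0$ on $\partial \Omega$. Everything else is a textbook logistic ODI computation plus a direct integration in time.
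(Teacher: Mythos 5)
Your proposal is correct and follows essentially the same route as the paper: integrate the first equation, use Cauchy--Schwarz (Hölder) to obtain the logistic ODI and conclude the $L^1$ bound by comparison, then integrate the mass identity in time to extract the space-time $L^2$ bound, arriving at the same constant $C=\mu^{-1}m(1+r\tau)$.
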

\begin{proof}
    Integrating the first equation of \eqref{1} over $\Omega$ and applying Holder's inequality yields that
    \begin{align} \label{L1-est.1}
        \frac{d}{dt} \int_\Omega  u(\cdot,t)&= r  \int_\Omega  u(\cdot,t) - \mu  \int_\Omega  u^2(\cdot,t) \notag \\
        &\leq r  \int_\Omega  u(\cdot,t) - \frac{\mu}{|\Omega|} \left ( \int_\Omega  u(\cdot,t) \right )^2,
    \end{align}
    for all $t \in (0,T_{\rm max})$. By standard comparison principle, we obtain that 
    \[
     \int_\Omega  u(\cdot,t) \leq m,
    \]
    for all $t \in (0,T_{\rm max})$ where $m=\max \left \{ \frac{r|\Omega|}{\mu}, \int_\Omega u_0. \right \}$. Now, noting that 
    \begin{align*}
     \int_t ^{t+\tau}\int_\Omega u^2 &=  \frac{r}{\mu} \int_t ^{t+\tau}\int_\Omega u - \frac{1}{\mu}\int_\Omega u(\cdot,t+\tau) + \frac{1}{\mu}\int_\Omega u(\cdot,t) \notag \\
        &\leq \frac{m(r\tau+1)}{\mu}, 
    \end{align*}
    which, together with \eqref{L1-est.1} completes the proof.
    
\end{proof}
With an \( L^1 \) bound for \( u \), we can establish \( L^p \) bounds for \( v \) for any \( p \geq 1 \), thanks to the parabolic regularity result in Sobolev spaces, as established in Lemma \ref{C52.parabolic-equation}.

\begin{lemma} \label{Lp-v}
    For any $p\geq 1$, there exists $C=C(p)>0$ such that 
    \begin{align}
        \int_\Omega v^p(\cdot,t) \leq C, \qquad \text{for all }t\in (0,T_{\rm max}).
    \end{align}
\end{lemma}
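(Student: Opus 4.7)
The plan is to apply the parabolic regularity result, Lemma \ref{C52.Para-Reg}, directly to the second equation of \eqref{1}, using the uniform $L^1$ bound on $u$ obtained in Lemma \ref{L1-est} together with a Sobolev embedding in dimension two.

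First, I would observe that $v$ solves the parabolic problem \eqref{C52.parabolic-equation} with $a=\alpha$ and $f=\beta u$, and the Neumann boundary condition \eqref{bdry}. By Lemma \ref{L1-est}, $f=\beta u \in L^\infty\!\bigl((0,T_{\rm max});L^1(\Omega)\bigr)$. The initial datum $v_0$ belongs to $W^{1,\infty}(\Omega) \subset W^{1,q}(\Omega)$ for every $q\geq 1$, so the hypotheses of Lemma \ref{C52.Para-Reg} hold with $p=1$ and any exponent $q<\frac{2\cdot 1}{2-1}=2$.

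Next, applying Lemma \ref{C52.Para-Reg} with $p=1$ yields
\[
v \in L^\infty\!\bigl((0,T_{\rm max});W^{1,q}(\Omega)\bigr)\qquad\text{for every }q\in[1,2),
\]
with a bound depending only on $q$, the $L^1$ bound $m$ for $u$, and $\|v_0\|_{W^{1,q}}$. Then, by the Sobolev embedding $W^{1,q}(\Omega)\hookrightarrow L^{\frac{2q}{2-q}}(\Omega)$ valid in two dimensions for $q<2$, I would obtain, for every such $q$,
\[
\|v(\cdot,t)\|_{L^{\frac{2q}{2-q}}(\Omega)} \leq C(q),\qquad t\in(0,T_{\rm max}).
\]
Since $\frac{2q}{2-q}\to\infty$ as $q\to 2^{-}$, every finite exponent $p\geq 1$ can be reached by choosing $q\in[1,2)$ sufficiently close to $2$, which furnishes the desired uniform bound on $\int_\Omega v^p$.

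There is no real obstacle in this proof: everything is a direct invocation of the cited parabolic regularity lemma, and the main conceptual point is merely recognizing that in dimension two an $L^1$ source already forces $W^{1,q}$-regularity for all $q<2$, hence $L^p$-integrability for all finite $p$. The only item worth stating explicitly is that the constant $C(p)$ is independent of $T_{\rm max}$, which is automatic because the bound on $\|u(\cdot,t)\|_{L^1(\Omega)}$ from Lemma \ref{L1-est} is time-independent and Lemma \ref{C52.Para-Reg} provides an $L^\infty$-in-time estimate on $\|v(\cdot,t)\|_{W^{1,q}(\Omega)}$.
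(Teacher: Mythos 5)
Your proposal is correct and follows essentially the same route as the paper: both invoke the $L^1$ bound on $u$ from Lemma \ref{L1-est}, feed it into Lemma \ref{C52.Para-Reg} to get $v\in L^\infty((0,T_{\rm max});W^{1,q}(\Omega))$ for all $q<2$, and conclude via the two-dimensional Sobolev embedding. The paper additionally derives the uniform $L^1$ bound on $v$ explicitly via Gr\"onwall and then uses a Gagliardo--Nirenberg form of the inequality, but this is only a cosmetic difference from reading the $L^q$ part of the $W^{1,q}$ bound directly, as you do.
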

\begin{proof}
    Integrating the second equation of \eqref{1} over $\Omega$ and applying Lemma \ref{L1-est} yields
    \begin{align*}
        \frac{d}{dt}\int_\Omega v + \alpha \int_\Omega v &= \beta \int_\Omega u \notag \\
        &\leq \beta m,
    \end{align*}
    where $m $ is given in \ref{L1-est}. Therefore, applying Gronwall's inequality to this implies that 
    \begin{align}
        \sup_{t\in (0,T_{\rm max})} \int_\Omega v(\cdot,t) \leq \max \left \{ \int_\Omega v_0, \frac{\beta m}{\alpha} \right \}.
    \end{align}
    Since $u \in L^\infty \left ( (0,T_{\rm max});L^1(\Omega) \right ) $, we apply Lemma \ref{C52.parabolic-equation} to obtain that $v \in L^\infty \left ( (0,T_{\rm max});W^{1,q}(\Omega) \right )$ for any $q \in [1,2)$. Now, applying Sobolev's inequality deduces that 
    \begin{align*}
        \int_\Omega v^p \leq c_1  \left ( \int_\Omega |\nabla v|^{\frac{2p}{2+p}} \right )^{\frac{p+2}{2}}+c_1 \left ( \int_\Omega v \right )^p \leq c_2, \quad \text{for all }t \in (0,T_{\rm max}),
    \end{align*}
     where $c_1>0$ and $c_2>0$. The proof is now complete.
   
\end{proof}
The following lemma provides an \( L^2 \) bound for the gradient of \( v \), which will be applied later in Lemma \ref{L1}.

\begin{lemma} \label{gradv}
    There exists $C>0$ such that
    \begin{align*}
        \int_\Omega |\nabla v(\cdot,t)|^2 \leq C, \qquad \text{for all }t \in (0, T_{\rm max}).
    \end{align*}
\end{lemma}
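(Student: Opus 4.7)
\textbf{Proof plan for Lemma \ref{gradv}.}
My plan is to derive a differential inequality for $y(t) := \int_\Omega |\nabla v|^2$ and then invoke Lemma \ref{ODI}. The standard test is to multiply the second equation of \eqref{1} by $-\Delta v$ and integrate by parts, using the Neumann boundary condition. This produces
\[
\tfrac{1}{2}\tfrac{d}{dt}\int_\Omega |\nabla v|^2 + \int_\Omega (\Delta v)^2 + \alpha \int_\Omega |\nabla v|^2 = \beta \int_\Omega u\,(-\Delta v),
\]
and an application of Young's inequality lets me absorb $\beta\int_\Omega u(-\Delta v)$ into $\frac{1}{2}\int_\Omega (\Delta v)^2 + \frac{\beta^2}{2}\int_\Omega u^2$. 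Dropping the nonnegative $\alpha$-term and the surviving $(\Delta v)^2$-term, I obtain a differential inequality of the form
\[
y'(t) \le \beta^2 \int_\Omega u^2(\cdot,t), \qquad t \in (0,T_{\rm max}),
\]
which fits the template of Lemma \ref{ODI} with $h\equiv 0$ and $g(t)=\beta^2\int_\Omega u^2(\cdot,t)$. By Lemma \ref{L1-est}, the function $g$ satisfies $\int_t^{t+\tau}g(s)\,ds \le L_3$ uniformly in $t$.

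To apply Lemma \ref{ODI} I still need the uniform bound $\int_t^{t+\tau} y(s)\,ds \le L_1$. I plan to obtain this by a separate, more elementary energy estimate: multiply the $v$-equation by $v$ and integrate to get
\[
\tfrac{1}{2}\tfrac{d}{dt}\int_\Omega v^2 + \int_\Omega |\nabla v|^2 + \alpha \int_\Omega v^2 = \beta \int_\Omega uv.
\]
Using the $L^2$-bound for $v$ from Lemma \ref{Lp-v} and the Cauchy--Schwarz inequality, $\beta\int_\Omega uv \le C\|u\|_{L^2(\Omega)}$; then integrating in time from $t$ to $t+\tau$ and applying Cauchy--Schwarz once more together with the second conclusion of Lemma \ref{L1-est} yields $\int_t^{t+\tau}\int_\Omega |\nabla v|^2 \le L_1$.

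With both hypotheses of Lemma \ref{ODI} verified, I conclude that $y(t) = \int_\Omega |\nabla v(\cdot,t)|^2$ is uniformly bounded on $(0,T_{\rm max})$. I do not expect a serious obstacle here: the two integral inputs (a uniform $L^2$-bound on $v$ and a uniform $\tau$-window bound on $\|u\|_{L^2}^2$) are already in place, and the only subtlety is that one cannot directly use Lemma \ref{C52.Para-Reg} to pass to $W^{1,2}$ since $u$ is presently known only in $L^1$ — this is precisely why the detour through Lemma \ref{ODI} is required.
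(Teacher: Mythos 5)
Your argument matches the paper's proof in structure and substance: test the $v$-equation once with $v$ (to get the space-time $L^2$ bound on $\nabla v$ from the $\tau$-window $L^2$ bound on $u$ in Lemma \ref{L1-est}), once with $-\Delta v$ (to get the differential inequality $y'\lesssim\int_\Omega u^2$), and then close via Lemma \ref{ODI}. The only cosmetic deviation is that you control $\beta\int_\Omega uv$ by Cauchy--Schwarz together with the $L^2$ bound on $v$ from Lemma \ref{Lp-v}, whereas the paper instead absorbs it via Young's inequality into the $\alpha\int_\Omega v^2$ term; both work and the rest of the argument is identical.
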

\begin{proof}
    Multiplying the second equation of \eqref{1} by $v$ and applying Young's inequality yields 
    \begin{align*}
        \frac{1}{2}\frac{d}{dt} \int_\Omega v^2 &= - \int_\Omega |\nabla v|^2 -\alpha \int_\Omega v^2 + \beta \int_\Omega uv \notag \\
        &\leq - \int_\Omega |\nabla v|^2 + \frac{\beta^2}{4 \alpha} \int_\Omega u^2.
    \end{align*}
    This, together with Lemma \ref{L1-est} and Lemma \ref{Lp-v} leads to 
    \begin{align} \label{gradv.1}
         \int_t ^{t+\tau} \int_\Omega |\nabla v(\cdot,s)|^2\, ds \leq \frac{\beta^2}{4 \alpha} \int_t ^{t+\tau} \int_\Omega u^2(\cdot,s)\, ds + \frac{1}{2} \int_\Omega v^2(\cdot,t) \leq c_1, \quad \text{for all } t\in (0, T_{\rm max-\tau}).
    \end{align}
    By differentiating the functional $y(t):= \frac{1}{2}\int_\Omega |\nabla v(\cdot,t)|^2$ in time, we obtain that
    \begin{align*}
        y'(t)+2\alpha y(t) &= - \int_\Omega (\Delta v)^2  -\beta \int_\Omega u \Delta v \notag \\
        &\leq  \frac{\beta^2}{4} \int_\Omega u^2.
    \end{align*}
    This, in conjunction with Lemma \ref{L1-est}, \eqref{gradv.1} and Lemma \ref{ODI} entails that $\sup_{t \in (0,T_{\rm max})} y(t) \leq c_2$ for some $c_2>0$, which completes the proof.
    
\end{proof}

The following lemma, though seemingly simple, is highly useful and serves as the key idea for establishing an \( L \ln L \) estimate for \( u \).

\begin{lemma} \label{M}
    Let $l(t):= -\int_\Omega u \ln v$ then
    \begin{align*}
           l'(t)  &\leq  rl(t)+2\int_\Omega \frac{|\nabla u|^2}{u}+ \alpha \int_\Omega u + \mu \int_\Omega u^2 \ln v  \notag \\
       &\quad -\chi \int_\Omega \frac{ u }{v^{1+k}}|\nabla v|^2 - \frac{1}{2}\int_\Omega \frac{ u }{v^{2}}|\nabla v|^2 -\beta \int_\Omega \frac{u^2}{v}.
    \end{align*}
\end{lemma}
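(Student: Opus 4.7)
The plan is to differentiate $l(t)$ directly in time, substitute the two PDEs for $u_t$ and $v_t$, integrate by parts using the homogeneous Neumann boundary conditions, and then absorb the single indefinite cross term $2\int_\Omega \frac{\nabla u \cdot \nabla v}{v}$ via a single application of Young's inequality.

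First I would compute
\[
l'(t) = -\int_\Omega u_t \ln v \; - \; \int_\Omega \frac{u\, v_t}{v},
\]
which is legitimate because $v$ stays strictly positive on $\bar\Omega\times(0,T_{\rm max})$ by Lemma \ref{local-exist}. For the first piece, I would plug in $u_t = \Delta u - \chi \nabla\cdot(\tfrac{u}{v^k}\nabla v) + ru - \mu u^2$ and integrate by parts in the diffusion and taxis terms: using $\partial_\nu u = \partial_\nu v = 0$,
\[
-\int_\Omega (\Delta u)\ln v = \int_\Omega \frac{\nabla u\cdot\nabla v}{v}, \qquad \int_\Omega \nabla\cdot\!\Bigl(\tfrac{u}{v^k}\nabla v\Bigr)\ln v = -\int_\Omega \frac{u}{v^{1+k}}|\nabla v|^2,
\]
so that $-\int_\Omega u_t\ln v$ contributes $\int_\Omega \frac{\nabla u\cdot\nabla v}{v} - \chi\int_\Omega \frac{u}{v^{1+k}}|\nabla v|^2 + r\,l(t) + \mu\int_\Omega u^2\ln v$.

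Next, for $-\int_\Omega \frac{u\,v_t}{v}$ I would substitute $v_t = \Delta v - \alpha v + \beta u$ and integrate by parts in the Laplacian term via $-\int_\Omega \frac{u\Delta v}{v} = \int_\Omega \nabla(u/v)\cdot\nabla v = \int_\Omega \frac{\nabla u\cdot\nabla v}{v} - \int_\Omega \frac{u|\nabla v|^2}{v^2}$. Collecting, the cross term appears with a factor of $2$:
\[
l'(t) = 2\int_\Omega \frac{\nabla u\cdot\nabla v}{v} - \int_\Omega \frac{u|\nabla v|^2}{v^2} - \chi\int_\Omega \frac{u}{v^{1+k}}|\nabla v|^2 + r\,l(t) + \mu\int_\Omega u^2\ln v + \alpha\int_\Omega u - \beta\int_\Omega \frac{u^2}{v}.
\]

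The decisive step is then a weighted Young's inequality on $2\int_\Omega \frac{\nabla u\cdot\nabla v}{v}$, writing the integrand as $2\,(|\nabla u|/\sqrt{u})\cdot(\sqrt{u}\,|\nabla v|/v)$ and applying $2ab \le 2a^2 + \tfrac12 b^2$, which yields
\[
2\int_\Omega \frac{\nabla u\cdot\nabla v}{v} \le 2\int_\Omega \frac{|\nabla u|^2}{u} + \frac12\int_\Omega \frac{u|\nabla v|^2}{v^2}.
\]
Combining this with the $-\int_\Omega \frac{u|\nabla v|^2}{v^2}$ from the previous step leaves $-\tfrac12\int_\Omega \frac{u|\nabla v|^2}{v^2}$ and delivers exactly the stated inequality. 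The only delicate point is selecting the Young coefficients so that both constants in the conclusion ($2$ in front of the dissipation and $-\tfrac12$ in front of the singular gradient term) are matched precisely; everything else is routine integration by parts on smooth positive functions.
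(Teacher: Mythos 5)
Your proposal is correct and matches the paper's own proof essentially line for line: differentiate $l(t)$, substitute the two PDEs, integrate by parts to produce the $2\int_\Omega \frac{\nabla u\cdot\nabla v}{v}$ cross term, and then apply Young's inequality with the weighting $2ab\le 2a^2+\tfrac12 b^2$ to absorb it into the dissipation and the $-\int_\Omega \frac{u|\nabla v|^2}{v^2}$ term. The only difference is cosmetic: you spell out the Young coefficients that the paper leaves implicit.
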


\begin{proof}
    Direct calculations shows us that
    \begin{align*}
        l'(t)&= -\int_\Omega u_t \ln v- \int_\Omega \frac{u}{v} v_t \notag \\
        &= - \int_\Omega  \left ( \Delta u  -\chi \nabla \cdot \left (u \frac{\nabla v}{v^k} \right )  +ru -\mu u^2 \right ) \ln v \notag \\
        &\quad - \int_\Omega \frac{u}{v } \left ( \Delta v- \alpha v +\beta u \right ) \notag \\
        &=2 \int_\Omega \nabla u \cdot \frac{ \nabla v }{v} -\chi \int_\Omega \frac{ u }{v^{1+k}}|\nabla v|^2 - \int_\Omega \frac{ u }{v^{2}}|\nabla v|^2 + \alpha \int_\Omega u - \beta \int_\Omega \frac{u^2 }{v} \notag \\
        &\quad +rl(t) + \mu \int_\Omega u^2 \ln v
    \end{align*}
    This, together with Young's inequality deduces that
    \begin{align*}
       l'(t) - r l(t)  &\leq  2\int_\Omega \frac{|\nabla u|^2}{u}+ \alpha \int_\Omega u + \mu \int_\Omega u^2 \ln v  \notag \\
       &\quad -\chi \int_\Omega \frac{ u }{v^{1+k}}|\nabla v|^2 - \frac{1}{2} \int_\Omega \frac{ u }{v^{2}}|\nabla v|^2 -\beta \int_\Omega \frac{u^2}{v}.
    \end{align*}
    The proof is now complete.
\end{proof}
We are now ready to establish the first key ingredient in proving the main result, namely, an \( L \ln L \) bound for \( u \).

\begin{lemma}\label{L1}
    There exists $C>0$ such that
   \begin{align*}
        \int_\Omega u(\cdot,t ) \ln u(\cdot,t)  \leq C, \qquad \text{for all }t\in (0,T_{\rm max}).
   \end{align*}
\end{lemma}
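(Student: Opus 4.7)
The plan is to analyze the energy functional $y(t) := \int_\Omega u \ln u - \lambda \int_\Omega u \ln v + \frac{1}{2}\int_\Omega |\nabla v|^2$ for some sufficiently small $\lambda \in (0, 1/2)$, derive a differential inequality of the form $y'(t) + \alpha y(t) \leq C$, and conclude the $L \ln L$ bound for $u$ via a Young--Fenchel argument. The starting point is to test the $u$-equation against $\ln u + 1$ to obtain
\[
\frac{d}{dt}\int_\Omega u\ln u = -\int_\Omega \frac{|\nabla u|^2}{u} + \chi\int_\Omega \frac{\nabla u\cdot\nabla v}{v^k} + r\int_\Omega u\ln u + r\int_\Omega u - \mu\int_\Omega u^2\ln u - \mu\int_\Omega u^2,
\]
and to combine this with $\lambda$ times Lemma \ref{M} (which supplies the dissipation $-\lambda\chi\int u|\nabla v|^2/v^{1+k}$, $-(\lambda/2)\int u|\nabla v|^2/v^2$, $-\lambda\beta\int u^2/v$ together with the sign-indefinite $+\lambda\mu\int u^2\ln v$), and with the standard estimate $\frac{1}{2}\frac{d}{dt}\int_\Omega |\nabla v|^2 \leq -\frac{1}{2}\int_\Omega (\Delta v)^2 - \alpha \int_\Omega |\nabla v|^2 + \frac{\beta^2}{2}\int_\Omega u^2$ obtained by testing the $v$-equation against $-\Delta v$. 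The resulting coefficient of $\int |\nabla u|^2/u$ is $(2\lambda-1)<0$.

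The absorption scheme has three main ingredients. (i) The cross term $\chi\int \nabla u\cdot\nabla v/v^k$ is controlled by Young's inequality using part of the remaining $\int |\nabla u|^2/u$ dissipation, leaving $C\int u|\nabla v|^2/v^{2k}$. Exploiting $k<1$, I split this via Young's with exponents $1/k, 1/(1-k)$ into $\varepsilon\int u|\nabla v|^2/v^2$ (absorbed by Lemma \ref{M}) and $C_\varepsilon\int u|\nabla v|^2$; the latter is handled by Cauchy--Schwarz plus the 2D Gagliardo--Nirenberg inequality $\|\nabla v\|_{L^4}^4 \leq C\|\Delta v\|_{L^2}^2\|\nabla v\|_{L^2}^2$ (using Lemma \ref{gradv}), giving $\delta\int (\Delta v)^2 + C_\delta \int u^2$, both absorbable. (ii) The $\lambda\mu\int u^2\ln v$ term is tamed by the Young--Fenchel estimate $u\ln(v+e) \leq u\ln u + v + e$ (conjugate to $x\ln x - x + 1$), multiplied by $u$, yielding $u^2\ln v \leq u^2\ln u + uv + eu$; the $u^2\ln u$ piece is absorbed by $\mu(1-\lambda)\int u^2\ln u$, while $\int uv$ and $\int u$ are bounded by Cauchy--Schwarz, Lemma \ref{L1-est}, and Lemma \ref{Lp-v}. (iii) The algebraic identity $r\int u\ln u + \lambda r\, l(t) = r\, y(t) - (r/2)\int_\Omega|\nabla v|^2$ (where $l(t) = -\int u\ln v$) transfers the two linear-growth terms into a single $+ry(t)$ plus a negative $|\nabla v|^2$ term.

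To close the inequality, I exploit the remaining $-\mu\int u^2\ln u$ dissipation via the pointwise bound $u\ln u \leq u^2\ln u/M$ for $u \geq M$, which gives $\int u^2\ln u \geq M\int u\ln u - C_M$. Since $\int u\ln u = y - \lambda l - (1/2)\int |\nabla v|^2$, this produces $-\mu M\int u\ln u = -\mu M\, y + \mu M\lambda\, l + (\mu M/2)\int |\nabla v|^2$. Choosing $M := (r + \alpha)/\mu$ makes the coefficient of $y$ equal to $r - \mu M = -\alpha$, the extra $\alpha/2$ on $\int|\nabla v|^2$ is absorbed by $-\alpha \int_\Omega|\nabla v|^2$, and $\mu M\lambda\, l$ is absorbed by the dissipation $-\lambda\beta\int u^2/v$ through $l(t) \leq (1/\varepsilon)\int u v^{-\varepsilon} \leq \sigma\int u^2/v + C_\sigma$ (from $-\ln v \leq v^{-\varepsilon}/\varepsilon$ and Young's inequality $uv^{-\varepsilon} \leq \sigma u^2/v + C\,u^{(1-2\varepsilon)/(1-\varepsilon)}$ for small $\sigma$, with the sublinear $u$-power bounded by Lemma \ref{L1-est}). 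The remaining lower-order terms ($r\int u$, $(\beta^2/2)\int u^2$, leftover $\int uv$, $\int u^2$) are controlled by Lemmas \ref{L1-est} and \ref{Lp-v} together with $u^2 \leq u^2\ln u + C$ into the residual $\mu\int u^2\ln u$. The outcome is $y'(t) + \alpha y(t) \leq C$, yielding $y(t) \leq C$ uniformly. Then Young--Fenchel's pointwise bound $\int u\ln v \leq \int u\ln u + \int v \leq \int u\ln u + C$ (via Lemma \ref{Lp-v}) gives $\int u\ln u = y(t) + \lambda\int u\ln v - \frac{1}{2}\int|\nabla v|^2 \leq y(t) + \lambda\int u\ln u + \lambda C$, i.e.\ $(1-\lambda)\int u\ln u \leq y(t) + \lambda C$, completing the proof.

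The main obstacle is handling the singular term $\int u|\nabla v|^2/v^{2k}$ from the cross-term estimate and the term $-\lambda r\int u\ln v$ which can diverge as $v\to 0$; neither admits a direct pointwise bound in the absence of any lower bound on $v$, and both are overcome by exploiting $k<1$ together with the dissipative terms supplied by Lemma \ref{M} through weighted Young's inequalities.
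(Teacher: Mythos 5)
Your proposal takes essentially the same route as the paper's: it builds the combined energy functional $\int_\Omega u\ln u - \lambda\int_\Omega u\ln v + \tfrac12\int_\Omega|\nabla v|^2$, tests the $u$-equation against $\ln u + 1$, tests the $v$-equation against $-\Delta v$, invokes Lemma~\ref{M} scaled by $\lambda$, splits the $\int_\Omega u|\nabla v|^2 v^{-2k}$ term via Young using $k<1$, controls $\int_\Omega|\nabla v|^4$ by the 2D Gagliardo--Nirenberg inequality together with Lemma~\ref{gradv}, bounds $\int_\Omega u^2\ln v$ and $-\int_\Omega u\ln v$ by Young--Fenchel-type estimates, closes via the $-\mu\int_\Omega u^2\ln u$ dissipation, and finishes by converting the bound on the functional into an $L\ln L$ bound via $\int u\ln v\le\int u\ln u+C$. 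The differences (producing the decay rate via the pointwise inequality $u\ln u\le u^2\ln u/M$ versus the paper's elementary absorption in \eqref{L1.6}, and bounding $-\int u\ln v$ via $-\ln v\le v^{-\varepsilon}/\varepsilon$ rather than the paper's weight $\sqrt v\ln v$) are purely bookkeeping. One small slip to correct: $\int_\Omega uv$ is \emph{not} bounded by Cauchy--Schwarz together with Lemma~\ref{L1-est} and Lemma~\ref{Lp-v}, since $\|u\|_{L^2}$ has no a priori bound; you must split it via Young's inequality into $\int_\Omega u^2+\int_\Omega v^2$ and absorb $\int_\Omega u^2$ into the residual $-\mu\int_\Omega u^2\ln u$ dissipation, a move you already use for the other quadratic terms.
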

\begin{proof}
    Differentiating the function $y(t):=   \int_\Omega u(\cdot,t ) \ln u(\cdot,t) + \frac{1}{2}\int_\Omega |\nabla v(\cdot.t)|^2$ in time yields
    \begin{align} \label{L1.1}
        y'(t)&= \int_\Omega (\ln u+1)u_t + \int_\Omega \nabla v \cdot  \nabla v_t \notag \\
        &:= I+J.
    \end{align}
    Making use of the first equation of \eqref{1} and integration by parts implies that 
    \begin{align} \label{L1.2}
        I &= \int_\Omega (\ln u+1) (\Delta u - \chi \nabla \cdot \left ( u \frac{\nabla v}{v^k} \right ) +ru -\mu u^2 ) \notag \\
        &= - \int_\Omega \frac{|\nabla u|^2}{u} + \chi \int_\Omega \frac{\nabla u \cdot \nabla v}{v^k} + \int_\Omega  (ru - \mu u^2) (\ln u +1).
    \end{align}
    By applying Young's inequality with $\varepsilon>0$, which will be determined later, we obtain
    \begin{align} \label{L1.3}
        \chi  \int_\Omega \frac{\nabla u \cdot \nabla v}{v^k} &\leq \varepsilon \int_\Omega \frac{|\nabla u|^2}{u} + \frac{\chi^2}{4 \varepsilon} \int_\Omega u \frac{|\nabla v|^2}{v^{2k}} \notag \\
        &\leq \varepsilon \int_\Omega \frac{|\nabla u|^2}{u} + \frac{\varepsilon}{2} \int_\Omega u \frac{|\nabla v|^2}{v^{2}}+c_1 \int_\Omega u|\nabla v|^2 \notag \\
        &\leq  \varepsilon \int_\Omega \frac{|\nabla u|^2}{u} + \frac{\varepsilon}{2} \int_\Omega u \frac{|\nabla v|^2}{v^{2}} + \varepsilon \int_\Omega |\nabla v|^4 +c_2 \int_\Omega u^2
    \end{align}
    where $c_1>0$ and $c_2>0$. Applying Gagliardo–Nirenberg interpolation inequality and Lemma \ref{gradv} deduces that
    \begin{align}\label{L1.4}
      \varepsilon \int_\Omega |\nabla v|^4 &\leq  c_3 \varepsilon \int_\Omega (\Delta v)^2 \int_\Omega |\nabla v|^2 +c_3 \varepsilon \left ( \int_\Omega |\nabla v|^2 \right )^2 \notag \\
      &\leq \varepsilon c_4 \int_\Omega (\Delta v)^2 +c_5,
    \end{align}
    where $c_3$, $c_4$, and $c_5$ are positive constants. Using integration by parts and Lemma \ref{gradv} implies that
    \begin{align}\label{L1.5}
      J + \frac{1}{2}\int_\Omega |\nabla v|^2 &= - \int_\Omega (\Delta v)^2 +  \left ( \frac{1}{2}-\alpha \right ) \int_\Omega |\nabla v|^2 -\beta \int_\Omega u \Delta v \notag \\
        &\leq -\frac{1}{2}\int_\Omega (\Delta v)^2+\frac{\beta^2}{2}\int_\Omega u^2 +c_6,
    \end{align}
    where $c_6>0$. One can verify that
    \begin{align}\label{L1.6}
        \left ( c_2 + \frac{\beta^2}{2} \right ) \int_\Omega u^2 + \int_\Omega u \ln u + \int_\Omega (ru-\mu u^2)(\ln u+1) \leq - \frac{\mu}{2} \int_\Omega u^2 +c_7,
    \end{align}
    for some $c_7>0$. Collecting from \eqref{L1.1} to \eqref{L1.6} implies that
    \begin{align}\label{L1.7}
        y'(t)+y(t) &\leq (\varepsilon-1) \int_\Omega \frac{|\nabla u|^2}{u} + \frac{\varepsilon}{2} \int_\Omega u \frac{|\nabla v|^2}{v^2} + \left ( \varepsilon c_4- \frac{1}{2} \right ) \int_\Omega (\Delta v)^2 -\frac{\mu}{2}\int_\Omega u^2 \ln u+c_8,
    \end{align}
    where $c_8=c_5+c_6+c_7$. Applying Lemma \ref{M} entails that 
    \begin{align}\label{L1.8}
       \varepsilon l'(t)+ \varepsilon l(t) &\leq 2\varepsilon \int_\Omega \frac{|\nabla u|^2}{u} + \alpha \varepsilon m + \mu \varepsilon \int_\Omega u^2 \ln v  \notag \\
        &\quad -\frac{\varepsilon}{2} \int_\Omega u \frac{|\nabla v|^2}{v^2} -\beta \varepsilon \int_\Omega \frac{u^2}{v} - \varepsilon(r+1)\int_\Omega u \ln v,
    \end{align}
    where $m$ is given in Lemma \ref{L1-est}. By using an elementary inequality that $xy \leq x\ln x+e^{y-1}$ for all $x>0$ and $y>0$ and Lemma \ref{Lp-v}, we obtain that
    \begin{align}\label{L1.9}
        \mu \varepsilon \int_\Omega u^2 \ln v &= \frac{\mu}{4} \int_\Omega u^2 \ln v^{4\varepsilon}  \notag  \\
        &\leq \frac{\mu}{2} \int_\Omega u^2 \ln u + \frac{\mu}{4e} \int_\Omega v^{4\varepsilon} \notag  \\
        &\leq  \frac{\mu}{2} \int_\Omega u^2 \ln u + c_9,
        \end{align}
        where $c_9>0$. Applying Young's inequality and Lemma \ref{Lp-v} entails that  
    \begin{align}\label{L1.10}
        -(r+1)\varepsilon \int_\Omega u \ln v &\leq \beta \varepsilon  \int_\Omega \frac{u^2}{v}+\frac{(r+1)^2 \varepsilon}{4\beta } \int_\Omega v \ln^2 v \notag \\
        &\leq \beta \varepsilon  \int_\Omega \frac{u^2}{v}+c_{10},
    \end{align}
    where $c_{10}>0$. Combining from \eqref{L1.7} to \eqref{L1.10}  entails that
    \begin{align}\label{L1.11}
        y'(t) +y(t)+\varepsilon l'(t)+\varepsilon l(t) &\leq (-1+3\varepsilon) \int_\Omega \frac{|\nabla u|^2}{u} 
       +\left (\varepsilon c_4 -\frac{1}{2} \right )\int_\Omega (\Delta v)^2+ c_{11},
    \end{align}
    where $c_{11}=\alpha \varepsilon m+ c_8+c_9+c_{10}$. Choosing $\varepsilon = \min \left \{ \frac{1}{3}, \frac{1}{2c_4} \right \}$ deduces that
    \begin{align}\label{L1.12}
         y'(t) +y(t)+\varepsilon l'(t)+\varepsilon l(t)  &\leq   c_{11} .
    \end{align}
    Applying Gronwall's inequality to this entails that 
    \begin{align*}
        \int_\Omega u \ln u - \varepsilon \int_\Omega u \ln v + \frac{1}{2} \int_\Omega |\nabla v|^2 \leq c_{12},
    \end{align*}
    for some $c_{12}>0$.  This, together with the inequality that $xy \leq x\ln x+e^{y-1}$ for all $x>0$ and $y>0$ implies 
    \begin{align*}
         \int_\Omega u \ln u &\leq \varepsilon \int_\Omega u \ln v +c_{12} \\
          &\leq \varepsilon \int_\Omega u \ln u+ \frac{\varepsilon}{e} \int_\Omega v+c_{12} \\
          &\leq  \varepsilon \int_\Omega u \ln u+ c_{13}.
    \end{align*}
 Note that $\varepsilon < 1$; thus, the lemma follows.
\end{proof}
Next, we derive the following estimate for the gradient of \( v \), which will assist us in establishing an \( L^p \) bound for \( u \).

\begin{lemma} \label{C5.l}
    For any $p >1$, there exist positive constants $C_1,C_2,C_3$ depending only on $p$ such that 
    \begin{align} \label{C5.l-1}
        \frac{d}{dt}\int_\Omega |\nabla v|^{2p}+ \int_\Omega |\nabla v|^{2p} \leq -C_1\int_\Omega \left | \nabla |\nabla v|^p \right |^2+ C_2 \int_\Omega u^2|\nabla v|^{2p-2} +C_3 \int_\Omega |\nabla v|^{2p}
    \end{align}
\end{lemma}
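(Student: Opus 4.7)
The strategy is to differentiate $\int_\Omega |\nabla v|^{2p}$ in time and exploit the parabolic structure of the second equation in \eqref{1} to extract a dissipative term of the form $\int_\Omega |\nabla |\nabla v|^p|^2$. Using $v_t=\Delta v-\alpha v+\beta u$ together with the pointwise identity $2\nabla v\cdot\nabla\Delta v=\Delta|\nabla v|^2-2|D^2 v|^2$, a direct computation gives
\begin{align*}
\frac{d}{dt}\int_\Omega |\nabla v|^{2p} &= p\int_\Omega |\nabla v|^{2p-2}\Delta|\nabla v|^2 - 2p\int_\Omega |\nabla v|^{2p-2}|D^2 v|^2 \\
&\quad - 2p\alpha\int_\Omega |\nabla v|^{2p} + 2p\beta\int_\Omega |\nabla v|^{2p-2}\nabla v\cdot\nabla u.
\end{align*}
Integrating the first term by parts and using $\nabla|\nabla v|^{2p-2}\cdot\nabla|\nabla v|^2=\tfrac{4(p-1)}{p^2}|\nabla|\nabla v|^p|^2$ produces
\[
p\int_\Omega|\nabla v|^{2p-2}\Delta|\nabla v|^2 = p\int_{\partial\Omega}|\nabla v|^{2p-2}\partial_\nu|\nabla v|^2 - \frac{4(p-1)}{p}\int_\Omega|\nabla|\nabla v|^p|^2,
\]
so that the desired negative dissipation $-C_1\int_\Omega|\nabla|\nabla v|^p|^2$ appears naturally, up to a boundary integral.

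For that boundary term, the Neumann condition $\partial_\nu v=0$ together with smoothness of $\partial\Omega$ yields the standard pointwise inequality $\partial_\nu|\nabla v|^2\le 2K_\Omega|\nabla v|^2$ on $\partial\Omega$ for some constant $K_\Omega>0$ depending only on the curvature of $\partial\Omega$. Applying a trace-Sobolev interpolation of the form $\|\phi\|_{L^2(\partial\Omega)}^2\le\varepsilon\|\nabla\phi\|_{L^2(\Omega)}^2+C(\varepsilon)\|\phi\|_{L^2(\Omega)}^2$ to $\phi=|\nabla v|^p$ allows the boundary contribution to be absorbed into an arbitrarily small multiple of $\int_\Omega|\nabla|\nabla v|^p|^2$ plus a constant multiple of $\int_\Omega|\nabla v|^{2p}$.

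For the $\nabla u$-term, integration by parts (the boundary integral vanishes thanks to $\partial_\nu v=0$) combined with the pointwise bound $|\nabla\cdot(|\nabla v|^{2p-2}\nabla v)|\le c_p|\nabla v|^{2p-2}|D^2 v|$ and Young's inequality yields
\[
\left|2p\beta\int_\Omega|\nabla v|^{2p-2}\nabla v\cdot\nabla u\right|\le \eta\int_\Omega|\nabla v|^{2p-2}|D^2 v|^2 + C(\eta)\int_\Omega u^2|\nabla v|^{2p-2}.
\]
Since $|\nabla|\nabla v||^2\le|D^2 v|^2$ pointwise, we have $\int_\Omega|\nabla v|^{2p-2}|D^2 v|^2\ge p^{-2}\int_\Omega|\nabla|\nabla v|^p|^2$; combined with the genuinely negative term $-2p\int_\Omega|\nabla v|^{2p-2}|D^2 v|^2$, this provides more than enough dissipation to absorb both $\eta\int_\Omega|\nabla v|^{2p-2}|D^2 v|^2$ and the residual $|\nabla|\nabla v|^p|^2$ contribution from the trace estimate. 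Choosing $\varepsilon$ and $\eta$ small enough, collecting all constants, and adding $\int_\Omega|\nabla v|^{2p}$ to both sides of the resulting inequality delivers \eqref{C5.l-1}.

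The main technical obstacle is the boundary term: the coefficient of $\int_\Omega|\nabla|\nabla v|^p|^2$ after all absorptions must remain strictly negative, which forces a careful matching of the interpolation parameter $\varepsilon$ against the geometric constant $K_\Omega$ and the prefactor $\tfrac{4(p-1)}{p}$. Once this balance is fixed, the remaining steps reduce to routine applications of Young's inequality, and the constants $C_1,C_2,C_3$ depend only on $p$ (and on $\alpha,\beta,\Omega$).
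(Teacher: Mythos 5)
Your proof is correct and fills in precisely the ``standard testing argument'' that the paper invokes by citing \cite{Minh4}[Lemma 4.2] without reproducing the details. All the key steps check out: the pointwise identity $\Delta|\nabla v|^2 = 2\nabla v\cdot\nabla\Delta v + 2|D^2v|^2$, the computation $\nabla|\nabla v|^{2p-2}\cdot\nabla|\nabla v|^2 = \tfrac{4(p-1)}{p^2}|\nabla|\nabla v|^p|^2$, the control of the boundary term via $\partial_\nu|\nabla v|^2\le 2K_\Omega|\nabla v|^2$ together with the trace interpolation $\|\phi\|_{L^2(\partial\Omega)}^2\le\varepsilon\|\nabla\phi\|_{L^2(\Omega)}^2+C(\varepsilon)\|\phi\|_{L^2(\Omega)}^2$, the integration by parts on the $\nabla u$-term (boundary contribution vanishes because $\partial_\nu v=0$), the pointwise bound $|\nabla\cdot(|\nabla v|^{2p-2}\nabla v)|\le c_p|\nabla v|^{2p-2}|D^2v|$, and the relation $|\nabla|\nabla v|^p|^2\le p^2|\nabla v|^{2p-2}|D^2v|^2$ used for the final absorption. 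One small remark: since $p>1$, the coefficient $\tfrac{4(p-1)}{p}$ in front of $-\int_\Omega|\nabla|\nabla v|^p|^2$ is already strictly positive, so the trace term can be absorbed by choosing $\varepsilon$ small against this alone; the additional dissipation from $-2p\int_\Omega|\nabla v|^{2p-2}|D^2v|^2$ (after paying $\eta<2p$ to Young) is a comfortable surplus rather than strictly necessary. You also correctly flag that $C_1,C_2,C_3$ in fact depend on $\alpha,\beta,\Omega$ as well as $p$, a harmless imprecision already present in the lemma's statement.
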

\begin{proof}
 The lemma can be proved using a standard testing argument, as established, for instance, in \cite{Minh4}[Lemma 4.2].
\end{proof}

Now, we derive the second key ingredient for proving the main result, as stated in the following lemma.

\begin{lemma} \label{Lp}
    There exists $C>0$ such that
    \begin{align*}
   \int_\Omega u^p+ \int_\Omega |\nabla v|^{2p}+ \int_\Omega u^pv^{-q}\leq C, \qquad \text{for all } t\in (0,T_{\rm max}).     
    \end{align*}  
    for $p>1$ and $0<q<p-1$.
\end{lemma}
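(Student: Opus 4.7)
The plan is to work with the energy functional
\begin{align*}
z(t) := \int_\Omega u^p v^{-q} + \int_\Omega u^p + \int_\Omega |\nabla v|^{2p}
\end{align*}
and to derive a differential inequality of the form $z'(t) + z(t) \leq C$, so that Gronwall's inequality delivers the uniform bound. Differentiating $\int u^p v^{-q}$ by testing the first equation of \eqref{1} against $pu^{p-1}v^{-q}$ and the second against $-qu^p v^{-q-1}$, and integrating by parts, produces the dissipative contributions
\begin{align*}
-p(p-1)\int u^{p-2}v^{-q}|\nabla u|^2,\quad -q(q+1)\int u^p v^{-q-2}|\nabla v|^2,\quad -p\mu\int u^{p+1}v^{-q},
\end{align*}
together with the cross terms $2pq\int u^{p-1}v^{-q-1}\nabla u\cdot\nabla v$ and $p\chi(p-1)\int u^{p-1}v^{-q-k}\nabla u\cdot\nabla v$, and harmless lower-order positive terms. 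Differentiation of $\int u^p$ yields the analogous $-p(p-1)\int u^{p-2}|\nabla u|^2$, $-p\mu\int u^{p+1}$, a cross term $p\chi(p-1)\int u^{p-1}v^{-k}\nabla u\cdot\nabla v$, and $pr\int u^p$, and the evolution of $\int|\nabla v|^{2p}$ is taken directly from Lemma \ref{C5.l}.

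The heart of the argument lies in absorbing the two cross terms. Young's inequality with weight $\varepsilon_1$ gives
\begin{align*}
2pq\int u^{p-1}v^{-q-1}\nabla u\cdot\nabla v \leq \varepsilon_1 \int u^{p-2}v^{-q}|\nabla u|^2 + \tfrac{(pq)^2}{\varepsilon_1}\int u^p v^{-q-2}|\nabla v|^2,
\end{align*}
which fits inside the diffusion budget of the two good terms precisely when $\varepsilon_1 \in \bigl(p^2 q/(q+1),\,p(p-1)\bigr)$; this interval is nonempty if and only if $q<p-1$, exactly the hypothesis. For the chemotaxis cross term, the same procedure produces a residual $C\int u^p v^{-q-2k}|\nabla v|^2$, and since $k<1$ the elementary pointwise inequality $v^{-q-2k} \leq \eta\, v^{-q-2} + C_\eta$ (valid for any $\eta>0$) allows me to absorb its singular part into the good term $-q(q+1)\int u^p v^{-q-2}|\nabla v|^2$ and trade it for a milder $\int u^p|\nabla v|^2$ remainder. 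An identical treatment handles the chemotaxis cross term in $\frac{d}{dt}\int u^p$.

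The remaining bad contributions $\int u^p|\nabla v|^2$ and $C_2\int u^2|\nabla v|^{2p-2}$ (from Lemma \ref{C5.l}) are each bounded via Young's inequality by a combination of $\varepsilon\int u^{p+1}$ and $C_\varepsilon\int|\nabla v|^{2(p+1)}$. The two-dimensional Gagliardo--Nirenberg inequality applied to $\phi = |\nabla v|^p$, combined with the a priori bound $\int|\nabla v|^2 \leq C$ from Lemma \ref{gradv}, yields $\int|\nabla v|^{2(p+1)} \leq C\int|\nabla|\nabla v|^p|^2 + C$, which is then absorbable into the good term $-C_1\int|\nabla|\nabla v|^p|^2$ of Lemma \ref{C5.l} for suitable weights. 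The leftover $\int u^{p+1}$ terms are handled by Lemma \ref{C52.ILGN} applied with $m=p$, $\xi=0$, $\sigma=1$: since Lemmas \ref{L1-est} and \ref{L1} give uniform control of $\int u$ and $\int u\ln(u+e)$, this delivers $\int u^{p+1} \leq \varepsilon' C\int|\nabla u^{p/2}|^2 + C$, absorbable into $-p(p-1)\int u^{p-2}|\nabla u|^2$ for $\varepsilon'$ small. Finally, the positive terms $(pr+q\alpha)\int u^p v^{-q}$, $pr\int u^p$, and $C_3\int|\nabla v|^{2p}$ are bounded by a multiple of $z(t)$ and absorbed using the coercivity implicit in $-p\mu\int u^{p+1}v^{-q}$ and $-p\mu\int u^{p+1}$, yielding $z'(t)+z(t)\leq C$, after which Gronwall finishes the proof. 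The principal difficulty is the simultaneous tuning of the several Young's inequalities: both hypotheses $q<p-1$ (for the pure $\nabla u\cdot\nabla v$ cross term) and $k<1$ (for the chemotaxis singularity $v^{-q-2k}$) are indispensable, and their residuals must be orchestrated so that every remaining positive contribution lies within the combined diffusion budget provided by the four negative dissipations above.
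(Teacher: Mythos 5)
Your proposal follows essentially the same route as the paper: the same energy functional $z(t)$, the same splitting of cross terms via Young's inequality with the interval condition on $\varepsilon_1$ driven precisely by $q<p-1$, the same use of $v^{-q-2k}\leq\eta v^{-q-2}+C_\eta$ (exploiting $k<1$), the same absorption of $\int|\nabla v|^{2(p+1)}$ via Gagliardo--Nirenberg and Lemma \ref{gradv}, the same use of Lemma \ref{C52.ILGN} together with the $L\ln L$ bound to digest $\int u^{p+1}$, and Gronwall to close. One small imprecision: you claim the term $(pr+q\alpha)\int u^p v^{-q}$ is absorbed by the coercivity of $-p\mu\int u^{p+1}v^{-q}$, but applying Young's inequality to split $u^p v^{-q}$ against $u^{p+1}v^{-q}$ leaves a residual proportional to $\int v^{-q}$, which is \emph{not} controlled since $v$ may approach zero. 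The paper instead uses the other available coercive term, $-q\beta\int u^{p+1}v^{-q-1}$, for which the Young residual is $\int v^{p-q}$, bounded by Lemma \ref{Lp-v} because $q<p-1<p$. This is a one-line fix and does not affect the structure of the argument, but the choice of coercive term is not interchangeable.
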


\begin{proof}
    Differentiating the following functional yields
    \begin{align} \label{Lp.1}
        \frac{d}{dt} \int_\Omega u^p v^{-q} &= p \int_\Omega u^{p-1}v^{-q}u_t -q \int_\Omega u^p v^{-q-1}v_t \notag \\
        &= p \int_\Omega u^{p-1}v^{-q} \left ( \Delta u -\chi \nabla \cdot \left ( uv^{-k} \nabla v \right ) +ru -\mu u^2 \right ) \notag \\
        &\quad -q \int_\Omega u^p v^{-q-1} \left ( \Delta v- \alpha v +\beta u \right ) \notag \\
        &=- p(p-1)\int_\Omega u^{p-2}v^{-q}|\nabla u|^2 +2pq \int_\Omega u^{p-1}v^{-q-1} \nabla u \cdot \nabla v  \notag\\
        &\quad + p(p-1)\chi \int_\Omega u^{p-1}v^{-q-k} \nabla u \cdot \nabla v -pq\chi \int_\Omega u^pv^{-q-k-1}|\nabla v|^2  \notag \\
        &\quad + (rp+q\alpha) \int_\Omega u^p v^{-q} -\mu p \int_\Omega u^{p+1}v^{-q} -q(q+1)\int_\Omega u^p v^{-q-2}|\nabla v|^2- q\beta \int_\Omega u^{p+1}v^{-q-1},
        \end{align}
        and 
        \begin{align}\label{Lp.2}
            \frac{d}{dt} \int_\Omega u^p = -\frac{4(p-1)}{p}\int_\Omega |\nabla u^\frac{p}{2}|^2 + 2\chi(p-1) \int_\Omega u^\frac{p}{2}v^{-k} \nabla u^\frac{p}{2} \cdot \nabla v + rp \int_\Omega u^p -\mu p \int_\Omega u^{p+1}.
        \end{align}
   Applying Young's inequality with $\varepsilon_1 > 0$, which will be determined later, implies that
    \begin{align}\label{Lp.3}
        2pq \int_\Omega u^{p-1}v^{-q-1} \nabla u \cdot \nabla v \leq (p(p-1)-\varepsilon_1) \int_\Omega u^{p-2}v^{-q} |\nabla u|^2+ \frac{p^2q^2}{p(p-1)-\varepsilon_1} \int_\Omega u^p v^{-q-2}|\nabla v|^2,
    \end{align}
    and 
    \begin{align}\label{Lp.4}
        p(p-1)\chi \int_\Omega u^{p-1}v^{-q-k} \nabla u \cdot \nabla v &\leq \varepsilon_1 \int_\Omega u^{p-2} v^{-q}|\nabla u|^2 + \frac{p^2(p-1)^2\chi^2}{4\varepsilon_1}\int_\Omega u^p v^{-q-2k}|\nabla v|^2 \notag \\
        &\leq \varepsilon_1 \int_\Omega u^{p-2} v^{-q}|\nabla u|^2 + \varepsilon_1 \int_\Omega u^p v^{-q-2}|\nabla v|^2 +c_1 \int_\Omega u^p|\nabla v|^2,
    \end{align}
    where $c_1=c_1(\varepsilon_1)>0$, and
    \begin{align}\label{Lp.5}
        2\chi(p-1) \int_\Omega u^\frac{p}{2} v^{-k} \nabla u^\frac{p}{2} \cdot \nabla v &\leq \frac{p-1}{p} \int_\Omega |\nabla u^\frac{p}{2}|^2 + p(p-1)\chi^2 \int_\Omega u^p v^{-2k} |\nabla v|^2 \notag\\
        &\leq  \frac{p-1}{p} \int_\Omega |\nabla u^\frac{p}{2}|^2 +\varepsilon_1 \int_\Omega u^p v^{-q-2} |\nabla v|^2 +c_2 \int_\Omega u^p|\nabla v|^2,
    \end{align}
    where $c_2=c_2(\varepsilon_1)>0$.
    Using Young's inequality again and Lemma \ref{Lp-v} entails that 
    \begin{align}\label{Lp.6}
        (1+rp+q\alpha) \int_\Omega u^p v^{-q} &\leq q\beta \int_\Omega u^{p+1}v^{-q-1}+c_3 \int_\Omega v^{p-q} \notag \\
         &\leq q\beta \int_\Omega u^{p+1}v^{-q-1}+c_4,
    \end{align}
    for some $c_3>0$ and $c_4>0$, and
    \begin{align}\label{Lp.7}
        (rp+1) \int_\Omega u^p \leq \mu p \int_\Omega u^{p+1} + c_5,
    \end{align}
    where $c_5>0$. Collecting from \eqref{Lp.1} to \eqref{Lp.7} leads to 
    \begin{align}\label{Lp.8}
         \frac{d}{dt} \int_\Omega u^p v^{-q}+ \int_\Omega u^p v^{-q}+\frac{d}{dt} \int_\Omega u^p +\int_\Omega u^p &\leq \left ( \frac{p^2q^2}{p(p-1)-\varepsilon_1} +2\varepsilon_1 -q(q+1) \right )\int_\Omega u^p v^{-q-2}|\nabla v|^2 \notag\\
         &\quad - \frac{3(p-1)}{p} \int_\Omega |\nabla u^\frac{p}{2}|^2 +c_6 \int_\Omega u^p|\nabla v|^2+c_7,
    \end{align}
    where $c_6=c_1+c_2$, and $c_7=c_4+c_5$. The condition $0<q<p-1$ entails that $\frac{p^2q^2}{p(p-1)}< q(q+1)$, which allows us to choose $\varepsilon_1 $ sufficiently small such that
    \[
    \frac{p^2q^2}{p(p-1)-\varepsilon_1} +2\varepsilon_1 -q(q+1)  \leq 0.
    \]
    This, together with \eqref{Lp.8} infers that 
    \begin{align}\label{Lp.9}
          \frac{d}{dt} \int_\Omega u^p v^{-q}+ \int_\Omega u^p v^{-q}+\frac{d}{dt} \int_\Omega u^p +\int_\Omega u^p &\leq  - \frac{3(p-1)}{p} \int_\Omega |\nabla u^\frac{p}{2}|^2 +c_6 \int_\Omega u^p|\nabla v|^2+c_7.
    \end{align}
    Setting $y(t):= \int_\Omega u^p v^{-q} +\int_\Omega u^p +  \int_\Omega |\nabla v|^{2p}$ and applying Lemma \ref{C5.l} deduces that 
    \begin{align}\label{Lp.10}
       y'(t)+y(t)   &\leq  - \frac{3(p-1)}{p} \int_\Omega |\nabla u^\frac{p}{2}|^2  -c_8\int_\Omega \left | \nabla |\nabla v|^p \right |^2 +c_6 \int_\Omega u^p|\nabla v|^2 \notag\\
         & \quad +c_9 \int_\Omega u^2|\nabla v|^{2p-2} +c_{10} \int_\Omega |\nabla v|^{2p} +c_7,
    \end{align}
    where $c_8, c_9,c_{10}$ are positive constants depending only on $p$. In light of Young's inequality, we obtain that 
    \begin{align}\label{Lp.11}
        c_6 \int_\Omega u^p|\nabla v|^2 +c_9 \int_\Omega u^2|\nabla v|^{2p-2} +c_{10} \int_\Omega |\nabla v|^{2p}+c_7 \leq \varepsilon_2 \int_\Omega |\nabla v|^{2p+2}+ c_{11} \int_\Omega u^{p+1} +c_{12},
    \end{align}
    where  $\varepsilon_2>0$ will be determined later,   $c_{11}>0$ and $c_{12}>0$. Applying Gagliardo–Nirenberg interpolation inequality and Lemma \ref{gradv} yields that
    \begin{align}\label{Lp.12}
        \varepsilon_2 \int_\Omega |\nabla v|^{2p+2} &\leq \varepsilon_2 c_{13}\int_\Omega |\nabla |\nabla v|^p|^2 \int_\Omega |\nabla v|^2 +\varepsilon_2 c_{13} \left ( \int_\Omega |\nabla v|^2 \right )^\frac{p}{2} \notag \\
        &\leq c_8\int_\Omega |\nabla |\nabla v|^p|^2 +c_{14},
    \end{align}
    where $c_{13}>0$, $\varepsilon_2= \frac{c_8}{c_{13}\sup_{t \in (0, T_{\rm max})} \int_\Omega |\nabla v(\cdot,t )|^2}$ and $c_{14}>0$. Lemma \ref{L1} asserts that \[\sup_{t \in (0,T_{\rm max})} \int_\Omega u \ln u <\infty,\] which allows us to apply Lemma \ref{C52.ILGN} with 
    \[
    \varepsilon_3= \frac{p-1}{pc_{11}\sup_{t \in (0,T_{\rm max})}\int_\Omega u \ln (u+e)}
    \]
   to obtain that
    \begin{align}\label{Lp.13}
        c_{11} \int_\Omega u^{p+1} &\leq \varepsilon_3 c_{11} \int_\Omega |\nabla u^\frac{p}{2}|^2 \int_\Omega u \ln (u+e) + \varepsilon_3 c_{11} \left ( \int_\Omega u \right )^p \int_\Omega u \ln (u+e)+c_{15} \notag \\
        &\leq \frac{p-1}{p} \int_\Omega |\nabla u^\frac{p}{2}|^2 +c_{16},
    \end{align}
    where  $c_{15}>0$, and $c_{16}>0$. Collecting from \eqref{Lp.10} to \eqref{Lp.13} implies that 
    \begin{align*}
        y'(t)+y(t) \leq c_{17}, \quad \text{for all }t \in (0,T_{\rm max}),
    \end{align*}
    where $c_{17}=c_7+c_{12}+c_{14}+c_{16}$. Applying Gronwall's inequality to this entails that 
    \begin{align*}
        y(t) \leq \max \left \{\int_\Omega u^p_0 + \int_\Omega u^p_0v^{-q}_0+ \int_\Omega |\nabla v_0|^{2p} , c_{17} \right \}, \qquad \text{for all }t \in (0,T_{\rm max}),
    \end{align*}
    which completes the proof.
\end{proof}
As a consequence of the above lemma, we conclude this section with the following estimate.

\begin{lemma} \label{L3}
    Let $p> \max \left \{ 2, \frac{1}{1-k} \right \}$ then there exists $C=C(p)>0$ such that 
    \begin{align*}
       \int_\Omega \frac{u^p(\cdot,t)|\nabla v(\cdot,t)|^p}{v^{kp}(\cdot,t)} \leq C, \qquad \text{for all }t \in (0,T_{\rm max}).
    \end{align*}
\end{lemma}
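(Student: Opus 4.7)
The plan is a short Cauchy–Schwarz argument that reduces the claim directly to the three bounds already packaged in Lemma \ref{Lp}. Specifically, I would split the integrand into a factor involving $u$ and $v^{-k}$ and a factor involving $\nabla v$ alone, by writing
\[
\int_\Omega \frac{u^p|\nabla v|^p}{v^{kp}}
\;\leq\; \left(\int_\Omega u^{2p} v^{-2kp}\right)^{1/2} \left(\int_\Omega |\nabla v|^{2p}\right)^{1/2},
\]
so that the singular factor $v^{-kp}$ is absorbed entirely into the first term while the gradient factor is freed from any $v$-weight in the second.

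The second factor is uniformly bounded in time because Lemma \ref{Lp} (applied with the same $p>2>1$) yields $\int_\Omega |\nabla v|^{2p}\le C(p)$. For the first factor I would invoke Lemma \ref{Lp} again, but now with the exponent $p$ in that lemma replaced by $2p$ and with the weight exponent $q:=2kp$. The admissibility condition of Lemma \ref{Lp} reads $0<q<(2p)-1$, i.e.\ $2kp<2p-1$, which is equivalent to $p>\frac{1}{2(1-k)}$. This is guaranteed (by a wide margin) by the standing hypothesis $p>\frac{1}{1-k}$, since this already forces $p(1-k)>1$ and hence $2p(1-k)>2>1$. Therefore $\int_\Omega u^{2p} v^{-2kp}\le C(p)$ uniformly in $t\in(0,T_{\mathrm{max}})$, and combining the two bounds finishes the proof with a constant that depends only on $p$, $k$, and the constants supplied by Lemma \ref{Lp}.

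There is essentially no delicate analytic step; the only thing to verify carefully is the book-keeping that matches $(2p,2kp)$ to the admissible pair $(p',q')$ with $0<q'<p'-1$ in Lemma \ref{Lp}, and this is precisely where the hypothesis $p>\tfrac{1}{1-k}$ is used. The auxiliary hypothesis $p>2$ is not needed for this lemma itself but will be used downstream when the estimate is fed into the Moser-type iteration that upgrades to the $L^\infty$ bound required by the blow-up criterion \eqref{local-exist-1}.
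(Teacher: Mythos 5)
Your proof is correct, but it takes a genuinely different route from the paper. The paper first uses the $L^p$ bound on $u$ with $p>2$ to invoke the $q=\infty$ case of the parabolic regularity result (Lemma \ref{C52.Para-Reg}), concluding $\sup_{t}\|\nabla v(\cdot,t)\|_{L^\infty(\Omega)}<\infty$; it then observes that $p>\frac{1}{1-k}$ gives $kp<p-1$, so Lemma \ref{Lp} with weight exponent $q=kp$ yields $\sup_t\int_\Omega u^p v^{-kp}<\infty$, and bounds the target integral by $\|\nabla v\|_{L^\infty}^p\int_\Omega u^p v^{-kp}$. You instead split by Cauchy--Schwarz and apply Lemma \ref{Lp} twice: once at exponent $p$ for $\int_\Omega|\nabla v|^{2p}$, and once at exponent $2p$ with weight $q=2kp$ for $\int_\Omega u^{2p}v^{-2kp}$, where the admissibility check $2kp<2p-1$ indeed follows (with room to spare) from $p>\frac{1}{1-k}$. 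Your route never leaves the framework of Lemma \ref{Lp} and avoids the heat-kernel regularity lemma entirely, which also makes your observation correct that $p>2$ is not needed for this step: your argument only requires $p>\max\{1,\tfrac{1}{2(1-k)}\}$. In the paper, $p>2$ is what licenses the $W^{1,\infty}$ conclusion of Lemma \ref{C52.Para-Reg} here, and the same threshold reappears independently in the proof of Theorem \ref{thm1} to make $\int_{t_0}^t(t-s)^{-\frac12-\frac1q}\,ds$ finite. Both proofs are sound; yours is more elementary and self-contained, while the paper's yields the pointwise bound on $\nabla v$ as a by-product.
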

\begin{proof}
    From Lemma \ref{Lp}, it follows that $u \in L^\infty \left ((0,T_{\rm max});L^p(\Omega) \right )$ for some $p>2$. This, in conjunction with Lemma \ref{C52.Para-Reg} entails that 
    \[
    \sup_{t \in (0,T_{\rm max})} \left \| \nabla v(\cdot,t) \right \|_{L^\infty(\Omega)}=c_1 < \infty.
    \]
    Moreover, we notice that $kp< p-1$ since $p> \frac{1}{1-k}$, which together with Lemma \ref{Lp} implies that 
    \[
    \sup_{t \in (0, T_{\rm max})} \int_\Omega \frac{u^p(\cdot,t)}{v^{kp}(\cdot,t)} = c_2 <\infty.
    \]
    Therefore, we obtain that 
    \begin{align*}
         \int_\Omega \frac{u^p(\cdot,t)|\nabla v(\cdot,t)|^p}{v^{kp}(\cdot,t)} \leq \sup_{t \in (0,T_{\rm max})} \left \| \nabla v(\cdot,t) \right \|_{L^\infty(\Omega)} \int_\Omega \frac{u^p(\cdot,t)}{v^{kp}(\cdot,t)} \leq c_1^pc_2, \quad \text{for all }t\in (0,T_{\rm max}),
    \end{align*}
    which proves the lemma.
\end{proof}

\section{Proof of the main result} \label{s4}
By applying standard heat semigroup estimates, we can now prove the uniform boundedness of solutions. Here, we follow a similar argument as in the proof of Theorem 0.1 in \cite{Winkler-logistic}.

\begin{proof}[Proof of Theorem \ref{thm1}]
    Since $ru -\mu u^2 \leq \frac{r^2}{4 \mu}$, we obtain that 
    \begin{align}\label{pr.1}
        u(\cdot,t)\leq  e^{(t-t_0) \Delta }u(\cdot,t_0) + \int_{t_0} ^t e^{(t-s)\Delta} \nabla \cdot \left ( u\frac{\nabla v}{v^k} \right )(\cdot,s)\, ds +\int_{t_0}^t e^{(t-s)\Delta }  \frac{r^2}{4\mu } \, ds 
    \end{align}
    where $t \in (0, T_{\rm max})$ and $t_0 = \max \left \{ 0, t-1\right \}$. Applying standard $L^p-L^q$ estimates for $(e^{t\Delta})_{t \geq 0 }$ (see \cite{Winkler-2010}[Lemma 1.3]) implies that 
    \begin{align} \label{pr.2}
        \left \|  e^{(t-t_0) \Delta }u(\cdot,t_0) \right \|_{L^\infty(\Omega)} \leq c_1m (1+(t-t_0)^{-1}), 
    \end{align}
    where $c_1>0$ and $m $ is the constant in Lemma \ref{L1-est}. For $q> \max \left \{ 2, \frac{1}{1-k} \right \}$ and $p \in (q, \infty)$, applying Lemma \ref{L3} entails that
    \begin{align} \label{pr.3}
        \int_{t_0} ^t e^{(t-s)\Delta} \nabla \cdot \left ( u\frac{\nabla v}{v^k} \right )(\cdot,s)\, ds &\leq  \int_{t_0} ^t \left \| e^{(t-s)\Delta} \nabla \cdot \left ( u\frac{\nabla v}{v^k} \right )(\cdot,s) \right \|_{L^\infty(\Omega)} \, ds \notag \\
        &\leq c_2 \int_{t_0} ^t (t-s)^{-\frac{1}{p}} \left \| e^{\frac{t-s}{2}\Delta} \nabla \cdot \left ( u\frac{\nabla v}{v^k} \right )(\cdot,s) \right \|_{L^p(\Omega)} \,ds \notag \\
        &\leq  c_3 \int_{t_0} ^t (t-s)^{-\frac{1}{2}-\frac{1}{q}}  \left \| u(\cdot,s)\frac{\nabla v(\cdot,s)}{v^k(\cdot,s)} \right \|_{L^q(\Omega)}  \, ds \notag \\
        &\leq c_4 (t-t_0)^{\frac{1}{2}-\frac{1}{q}}.
    \end{align}
    Additionally, we have that 
    \begin{align} \label{pr.4}
        \int_{t_0}^t e^{(t-s)\Delta }  \frac{r^2}{4\mu } \, ds  = \frac{r^2}{4\mu }(t-t_0).
    \end{align}
    Collecting from \eqref{pr.1} to \eqref{pr.4} yields
    \begin{align}
        \left \|u(\cdot,t) \right \|_{L^\infty(\Omega)} &\leq c_5 ((t-t_0)^{-1}+(t-t_0)^{\frac{1}{2}-\frac{1}{q}} + (t-t_0)) \notag \\
        &\leq c_6(t^{-1}+1), \quad \text{for all }t \in (0, T_{\rm max}),
    \end{align}
     which further deduces that  $u$ is bounded in $\Omega \times (0,T_{\rm max})$ since $u$ is bounded in $\Omega \times (0,\frac{T_{\rm max}}{2})$ by Lemma \ref{local-exist}. Therefore, by the extensibility property of the solutions as established in Lemma \eqref{local-exist-1}, it follows that $T_{\rm max} = \infty$ and that $(u,v)$ is bounded in $\Omega \times (0, \infty)$. 
\end{proof}

\end{document}